\newtheorem{thm}{Theorem}[section]
\newtheorem{lem}[thm]{Lemma}
\newcommand{\ba}{\begin{array}}
\newcommand{\ea}{\end{array}}
\newcommand{\bt}{\begin{tabular}}
\newcommand{\et}{\end{tabular}}
\newcommand{\btb}{\begin{table}}
\newcommand{\etb}{\end{table}}
\newcommand{\bc}{\begin{center}}
\newcommand{\ec}{\end{center}}
\newcommand{\bea}{\begin{eqnarray}}
\newcommand{\eea}{\end{eqnarray}}
\newcommand{\Bea}{\begin{eqnarray*}}
\newcommand{\Eea}{\end{eqnarray*}}
\newcommand{\beq}{\begin{equation}}
\newcommand{\eeq}{\end{equation}}
\begin{document}
\baselineskip 16.5pt

\title{Divisibility by 2 of Stirling numbers of the second kind and their differences}

\author{ Jianrong Zhao\\
{\it School of Economic Mathematics, Southwestern
University of }\\ {\it Finance and Economics, Chengdu 610074, P.R. China}\\
Email: mathzjr@foxmail.com\\
\\
Shaofang Hong\thanks{Corresponding Author. The work was supported
partially by National Science Foundation of China Grant \#11371260
and by the Ph.D. Programs Foundation of Ministry of Education
of China Grant \#20100181110073.}\\
{\it Yangtze Center of Mathematics, Sichuan University, Chengdu 610064, P.R. China}\\
Email: sfhong@scu.edu.cn, s-f.hong@tom.com, hongsf02@yahoo.com\\
\\
Wei Zhao\\
{\it Mathematical College, Sichuan University, Chengdu 610064, P.R. China}}
\date{}
\maketitle


\noindent{\bf Abstract.} Let $n,k,a$ and $c$ be positive integers
and $b$ be a nonnegative integer. Let $\nu_2(k)$ and $s_2(k)$ be the
2-adic valuation of  $k$ and  the sum of binary digits of $k$,
respectively. Let $S(n,k)$ be the Stirling number of the second
kind. It is shown that $\nu_2(S(c2^n,b2^{n+1}+a))\geq s_2(a)-1,$
where $0<a<2^{n+1}$ and $2\nmid c$. Furthermore, one gets that
$\nu_2(S(c2^{n},(c-1)2^{n}+a))=s_2(a)-1$, where $n\geq 2$, $1\leq
a\leq 2^n$ and  $2\nmid c$. Finally, it is proved that if $3\leq k\leq
2^n$ and  $k$ is not a power of 2 minus 1, then
$\nu_2(S(a2^{n},k)-S(b2^{n},k))=n+\nu_2(a-b)-\lceil\log_2k\rceil
+s_2(k)+\delta(k), $ where $\delta(4)=2$, $\delta(k)=1$ if $k>4$ is
a power of 2, and $\delta(k)=0$ otherwise. This confirms a
conjecture of Lengyel  raised in 2009 except when $k$ is a power of
2 minus 1.

\vspace{1mm}

\noindent \textbf{Keywords:}  Stirling numbers of the second kind,
Congruences for power series, Bell polynomial, 2-Adic valuation

\vspace{1mm}

\noindent {\small \textbf{MR(2000) Subject Classification:} Primary
11B73, 11A07}


\section{Introduction and the statements of main results}

The Stirling numbers of the second kind $S(n,k)$ is defined for $n\in\mathbb{N}$
and positive integer $k\leq n$ as the number of ways to partition a set of  $n$
elements into exactly $k$ non-empty subsets. It satisfies the recurrence relation
\begin{align*}
  S(n,k)=S(n-1,k-1)+kS(n-1,k),
\end{align*}
with initial condition $S(0,0)=1$ and $S(n,0)=0$ for $n>0$.
 There is also an explicit formula in terms of binomial coefficients given by
\begin{align}\label{1.1}
  S(n,k)=\frac{1}{k!}\sum_{i=0}^k(-1)^i{k\choose i}(k-i)^n.
\end{align}
Divisibility properties of Stirling numbers have been studied from a
number of different perspectives. It is known that for each fixed
$k$, the sequence $\{S(n,k),n\geq k\}$ is periodic modulo prime
powers. The length of this period has been studied by Carlitz
\cite{[Ca]} and Kwong \cite{[Kw]}. Chan and Manna \cite{[CM]}
characterized $S(n,k)$ modulo prime powers in terms of binomial
coefficients. In fact, they gave explicit formulas for $S(n,k)$
modulo 4, then for $S(n,a2^m)$ modulo $2^m$, where $m\ge3$, $a>0$ and
$n\ge a2^m+1$, and finally for $S(n,ap^m)$ modulo $p^m$ with $p$
being an odd prime.

Divisibility properties of integer sequences are often expressed in
terms of $p$-adic valuations. Given a prime $p$ and a positive
integer $m$, there exist unique integers $a$ and $n$, with $p\nmid
a$ and $n\geq 0$, such that $m=ap^n$. The number $n$ is called the
{\it $p$-adic valuation} of $m$, denoted by $n=\nu_p(m)$. The numbers
$\min\{\nu_p(k!S(n,k)):m\leq k\leq n\}$ are important in algebraic
topology, see, for example, \cite{[BD], [CK], [D2], [D3], [D4],
[Lu1], [Lu2]}. Some work on evaluating $\nu_p(k!S(n,k))$ has appeared
in above papers as well as in \cite{[Cl],[D1],[Y]}. Amdeberhan,
Manna and Moll \cite{[AMM]} investigated the 2-adic valuations of
Stirling numbers of the second kind and computed $\nu_2(S(n,k))$ for
$k\le 4$. They also raised an interesting conjecture on the
congruence classes of $S(n, k)$, modulo powers of 2. Recently,
Bennett and Mosteig \cite{[BM]} used computational methods to justify
this conjecture if $k\le 20$. But this conjecture is still kept open
if $k\ge21$.

This paper deals with the 2-adic valuations of the Stirling numbers
of the second kind. Lengyel  \cite{[Le1]} studied the 2-adic
valuations of $S(n,k)$ and conjectured, proved by Wannemacker
\cite{[W]}, $\nu_2(S(2^n,k))=s_2(k)-1,$ where $s_2(k)$ means the
base $2$ digital sum of $k$.  Using Wannemacker's result, Hong, Zhao
and Zhao \cite{[HZZ]} proved that $\nu_2(S(2^n+1,k+1))=s_2(k)-1$,
which confirmed another conjecture of Amdeberhan,
Manna and Moll \cite{[AMM]}.  Lengyel \cite{[Le2]} showed that if
$1\leq k\leq 2^n$, then $\nu_2(S(c2^n,k))=s_2(k)-1$ for any positive
integer $c$. Meanwhile, Lengyel \cite{[Le2]} proved that
$\nu_2(S(c2^n,k))\geq s_2(k)-1$ if $c\geq1$ is an odd integer and
$1\le k\leq 2^{n+1}$. Actually, a more general result is true.
That is, one has
\begin{thm}\label{thm 1}
Let $n,a,b, c\in\mathbb{N}$ with $0<a<2^{n+1}$, $b2^{n+1}+a\leq c2^n$
and $c\geq 1$ being odd. Then
$$
\nu_2(S(c2^n,b2^{n+1}+a))\geq s_2(a)-1.
$$
\end{thm}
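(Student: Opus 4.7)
The plan is to convert the bound into one on a convolution of smaller Stirling numbers via an EGF factorization, and then estimate term-by-term using known 2-adic inequalities. Writing $k=b2^{n+1}+a$ and comparing coefficients in $(e^t-1)^k=(e^t-1)^a\cdot(e^t-1)^{b2^{n+1}}$ yields the identity
\begin{equation*}
\binom{k}{a}\,S(c2^n,k) \;=\; \sum_{m+r=c2^n}\binom{c2^n}{m}\,S(m,a)\,S(r,b2^{n+1}).
\end{equation*}
Because $0<a<2^{n+1}$, the base-$2$ digits of $a$ and $b2^{n+1}$ occupy disjoint positions, so Kummer's theorem gives $\nu_2\binom{k}{a}=0$; thus $\nu_2(S(c2^n,k))$ equals the $2$-adic valuation of the right-hand sum, and it suffices to show the latter is $\geq s_2(a)-1$.

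The three estimates I combine are: Kummer's theorem $\nu_2\binom{c2^n}{m}=s_2(m)+s_2(r)-s_2(c)$ for any split $m+r=c2^n$; Wannemacker's inequality $\nu_2(S(m,a))\geq s_2(a)-s_2(m)$; and Lengyel's inequality $\nu_2(S(r,b2^{n+1}))\geq s_2(b)-1$, which applies when $r=c'2^{n'}$ with $c'$ odd and $b2^{n+1}\leq 2^{n'+1}$, with the weaker Wannemacker bound $s_2(b)-s_2(r)$ as a fallback. For the \emph{principal} summand $r=b2^{n+1}$, $m=(c-2b)2^n$, one has $S(r,r)=1$, and since $c-2b$ is a positive odd integer under the hypothesis, Lengyel's previous inequality (the $b=0$ case of the theorem, already known) gives $\nu_2(S((c-2b)2^n,a))\geq s_2(a)-1$; pairing this with $\nu_2\binom{c2^n}{b2^{n+1}}=s_2(b)+s_2(c-2b)-s_2(c)\geq 0$ (by the subadditivity $s_2(c)\leq s_2(b)+s_2(c-2b)$) shows this term meets the required bound.

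The main obstacle lies in the remaining terms, for which Wannemacker alone only yields $\nu_2\geq s_2(a)+s_2(b)-s_2(c)$, a bound that can fall short when $s_2(c)>s_2(b)+1$. My strategy is to refine the analysis via the power-series congruence
\begin{equation*}
(e^t-1)^{2^{n+1}} \;\equiv\; e^{2^{n+1}t}-1 \pmod{2\,(e^t-1)\,\mathbb{Z}[[t]]},
\end{equation*}
which follows by induction from the base case $(e^t-1)^2=(e^{2t}-1)-2(e^t-1)$. Raising to the $b$-th power and expanding in this ideal rewrites $(e^t-1)^{b2^{n+1}}$ as $(e^{2^{n+1}t}-1)^b$ plus a cascade of corrections, each carrying an additional factor of $2$ and an additional factor of $(e^t-1)$. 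The leading piece reduces, via the same convolution trick, to sums involving $S(\cdot,b)$ to which Lengyel's bound applies directly, while the correction terms are absorbed by their extra powers of $2$ together with Wannemacker's inequality. Carefully tracking how the binary structure of $k=b2^{n+1}+a$ forces the required cancellations, and assembling the contributions, completes the proof.
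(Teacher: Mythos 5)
Your setup is fine as far as it goes: the identity $\binom{k}{a}S(c2^n,k)=\sum_{m+r=c2^n}\binom{c2^n}{m}S(m,a)S(r,b2^{n+1})$ is correct, $\nu_2\binom{k}{a}=0$ because the binary digits of $a$ and $b2^{n+1}$ are disjoint, and your treatment of the principal term $r=b2^{n+1}$ is sound. But the theorem lives entirely in the part you leave unproved. As you yourself observe, for a general split $m+r=c2^n$ the factor $r$ has no special $2$-adic structure, so Lengyel's bound for $S(r,b2^{n+1})$ is unavailable and Kummer plus De Wannemacker's inequality only give $s_2(a)+s_2(b)-s_2(c)$, which is strictly less than $s_2(a)-1$ whenever $s_2(c)>s_2(b)+1$; individual terms really can have valuation below the target, so some cancellation or a smarter decomposition is mandatory. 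The mechanism you propose to supply it does not work as described: the congruence $(e^t-1)^{2^{n+1}}\equiv e^{2^{n+1}t}-1$ modulo $2(e^t-1)$ (in the Hurwitz-series ring, where it does hold) controls the coefficients of $(e^t-1)^{k}$, i.e.\ the numbers $k!\,S(N,k)$, only up to small powers of $2$ coming from the "cascade of corrections". To say anything about $\nu_2(S(c2^n,k))$ you must divide by $a!\,(b2^{n+1})!$, whose $2$-adic valuation is roughly $b2^{n+1}+a$, so a congruence modulo $2$, or even modulo $2^{b}$, is vacuous after that normalization; moreover the "leading piece" $(e^{2^{n+1}t}-1)^b(e^t-1)^a$ is not by itself $2$-adically integral (let alone close to $(e^t-1)^k$) after dividing by $(b2^{n+1})!\,a!$ --- the required valuation only emerges from cancellation among the leading piece and the corrections, which is exactly what your sketch does not track. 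The closing sentence about "carefully tracking ... the required cancellations" is therefore the entire missing proof, not a routine verification.

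For comparison, the paper avoids this trap by splitting the \emph{first} argument rather than the second: it writes $c2^n=c_22^{\,n+\nu_2(b)+1}+c_12^n$ with $c=c_1+c_22^{\nu_2(b)+1}$, $c_1<2^{\nu_2(b)+1}$ (and $(c-1)2^n+2^n$ in the base case $b=1$), applies De Wannemacker's convolution formula for $S(n+m,k)$, and runs an induction on $\lfloor\log_2 b\rfloor$. The point of that adaptive split is that one Stirling factor in every term has first argument divisible by a large power of $2$, so Lengyel's lemmas and the induction hypothesis give each term the bound $s_2(a)-1$ individually --- no cancellation is ever needed. If you want to salvage your route, you would need a comparably strong lower bound on $\nu_2(S(r,b2^{n+1}))$ for \emph{arbitrary} $r$, or an argument that groups the low-valuation terms and exhibits their cancellation; neither is provided.
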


If one picks $b=\frac{c-1}{2}$ and $1\leq a\leq 2^n$, then the lower bound
in Theorem \ref{thm 1} is arrived as the following result shows.

\begin{thm}\label{thm 2}
Let $a,c,n\in\mathbb{N}$ with $c\geq 1$ being odd, $n\geq 2$ and
$1\leq a\leq 2^n$. Then
$$
\nu_2(S(c2^{n},(c-1)2^{n}+a))=s_2(a)-1.
$$
\end{thm}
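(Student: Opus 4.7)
By Theorem \ref{thm 1} applied with its parameter $b$ set equal to $(c-1)/2$ (a nonnegative integer, since $c$ is odd) and with the given $a$, one immediately obtains the lower bound $\nu_2(S(c 2^n, (c-1)2^n + a)) \geq s_2(a) - 1$; the side hypothesis $b \cdot 2^{n+1} + a = (c-1)2^n + a \leq c 2^n$ of Theorem \ref{thm 1} reduces to our assumption $a \leq 2^n$. The remainder of the argument is devoted to the matching upper bound.

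The plan is to iterate the basic recurrence $S(N, K) = S(N-1, K-1) + K\, S(N-1, K)$ a total of $(c-1)2^n$ times, in order to bring the first argument of $S$ down from $c 2^n$ to $2^n$. A routine induction on the number of iterations produces the expansion
\begin{equation*}
S(c 2^n, (c-1)2^n + a) = \sum_{b=a}^{2^n} h_{b-a}\!\bigl(b, b+1, \ldots, (c-1)2^n + a\bigr)\, S(2^n, b),
\end{equation*}
where $h_p(x_1, \ldots, x_q)$ denotes the complete homogeneous symmetric polynomial of degree $p$. Since $h_0 = 1$, the $b = a$ summand equals $S(2^n, a)$, whose 2-adic valuation is $s_2(a) - 1$ by Wannemacker's theorem. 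Using $\nu_2(S(2^n, b)) = s_2(b) - 1$ (Wannemacker again) for $1 \leq b \leq 2^n$, the matching upper bound will follow once one proves the strict inequality
\begin{equation*}
\nu_2\!\bigl(h_{b-a}(b, b+1, \ldots, (c-1)2^n + a)\bigr) > s_2(a) - s_2(b), \qquad a < b \leq 2^n.
\end{equation*}

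This last step is the main obstacle. To attack it I plan to use the finite-difference representation
\begin{equation*}
h_p(k, k+1, \ldots, k+q-1) = \frac{1}{(q-1)!}\,\Delta^{q-1}\!\bigl[x^{p+q-1}\bigr]\Big|_{x=k},
\end{equation*}
applied with $p = b-a$, $k = b$, and $q = (c-1)2^n + a - b + 1$, so that $p+q-1 = (c-1)2^n$. Writing $M = (c-1)2^n + a - b$ and expanding via $\Delta^M x^R = \sum_{m \geq M} S(R, m)\, m^{\underline{M}}\, x^{\underline{m - M}}$ rewrites $h_{b-a}$ as a linear combination of the near-diagonal Stirling numbers $S((c-1)2^n, (c-1)2^n - d)$ for $0 \leq d \leq b - a$. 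Because $c$ is odd, $(c-1)2^n$ is divisible by $2^{n+1}$, which forces these near-diagonal Stirling numbers to be 2-adically very large; these large valuations must then be balanced against the denominator $M!$ and the falling factorial $b^{\underline{m - M}}$. The most delicate sub-case is when $s_2(b) < s_2(a)$, so that $s_2(a) - s_2(b)$ is strictly positive and the 2-adic bound on $h_{b-a}$ must compensate without losing too much in the factorial denominator.
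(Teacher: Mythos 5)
Your lower bound is fine: Theorem \ref{thm 1} with $b=(c-1)/2$ does give $\nu_2(S(c2^n,(c-1)2^n+a))\geq s_2(a)-1$, and the expansion $S(c2^n,(c-1)2^n+a)=\sum_{b=a}^{2^n}h_{b-a}(b,b+1,\ldots,(c-1)2^n+a)\,S(2^n,b)$ obtained by iterating the recurrence is a correct identity (I checked it numerically for $n=2$, $c=3$). But the argument stops exactly where the theorem's difficulty begins: the strict inequality $\nu_2\bigl(h_{b-a}(b,\ldots,(c-1)2^n+a)\bigr)>s_2(a)-s_2(b)$ for all $a<b\leq 2^n$ is never proved --- you only announce a plan for attacking it. That inequality carries the entire content of the upper bound, and it is genuinely delicate: for instance with $a=2^n-1$, $b=2^n$ one has $h_1=\sum_{j=2^n}^{c2^n-1}j=(c-1)2^{n-1}\bigl((c+1)2^n-1\bigr)$, whose valuation equals $n$ exactly when $\nu_2(c-1)=1$, so the required bound $>s_2(a)-s_2(b)=n-1$ holds with no room to spare. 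The route you sketch --- writing $h_{b-a}$ as $\frac{1}{(q-1)!}\Delta^{q-1}[x^{p+q-1}]$ and expanding into near-diagonal Stirling numbers $S((c-1)2^n,(c-1)2^n-d)$ times binomial and falling-factorial factors --- would require new 2-adic estimates for those near-diagonal Stirling numbers and for the accompanying binomials, none of which are established here or available in the paper's lemma stock. As it stands, you have reduced the theorem to an unproved (if plausible) claim, not proved it.

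For comparison, the paper avoids symmetric functions entirely: after disposing of $a=2^n$ trivially, it sets $b=(c-1)/2$ and inducts on $s_2(b)$, using the Agoh--Dilcher convolution identity (Lemma \ref{lem 3}) with $k_1=b2^{n+1}-1$, $k_2=2^n-1$, $r=b2^{n+1}+a$ to write $(b2^{n+1}+a)!\,S(b2^{n+1}+2^n,b2^{n+1}+a)$ as an explicit factor times $\sum_{i=a}^{2^n}l(i)$, where each $l(i)$ involves $S(2^n,i)$ and $S(b2^{n+1},b2^{n+1}+a-i)$; Legendre's formula, Lemma \ref{lem 1} and the inductive hypothesis show that $l(a)$ has strictly smaller valuation than every other $l(i)$, which yields the exact value in one stroke. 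If you wish to salvage your decomposition, the missing inequality on $\nu_2(h_{b-a})$ is where essentially all of the work of the theorem lies, and you should expect its proof to be comparable in length and delicacy to the paper's inductive argument.
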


Another interesting property is related to the difference of Stirling
numbers of the second kind. Lengyel \cite{[Le2]} studied the 2-adic
valuations of the difference  $S(c2^{n+1},k)-S(c2^n,k)$ with
$1\leq k\leq 2^n$ and $c\geq 1$ odd. In the meantime, Lengyel
posed the following conjecture.\\

\noindent{\bf Conjecture 1.1.} {\it\cite{[Le2]} Let $n, k, a, b\in
\mathbb{N}$, $c\geq 1$ being odd and $3\leq k\leq 2^n$. Then
\begin{align}
  \nu_2(S(c2^{n+1},k)-S(c2^{n},k))=n+1-f(k)\label{ad1}
\end{align}
and
$$\nu_2(S(a2^{n},k)-S(b2^{n},k))=n+1+\nu_2(a-b)-f(k)$$
for some function $f(k)$ which is independent of $n$.}\\

As usual, for any real number $x$, let $\lceil x\rceil$ and
$\lfloor x\rfloor$ denote the smallest integer no less than $x$ and
the biggest integer no more than $x$, respectively. Note that
Lengyel \cite{[Le2]} proved that (\ref{ad1}) is true for any integer
$k$ with $s_2(k)\leq 2$. Lengyel \cite{[Le2]} also noticed that for
small values of $k$, numerical experimentation suggests that
$f(k)=1+\lceil\log_2k\rceil-s_2(k)-\delta(k)$, where $\delta(4)=2$
and otherwise it is zero except if $k$ is a power of two or one
less, in which cases $\delta(k)=1$. The present paper focuses on
investigating Conjecture 1.1. One has the following result.

\begin{thm}\label{thm 4}
Let $n,k, a, b\in \mathbb{N}$, $c\geq 1$ being odd, $3\leq k\leq
2^n$, and $a>b$. If $k$ is not a power of 2 minus 1, then
\begin{align}\label{c2}
\nu_2(S(a2^{n},k)-S(b2^{n},k))=n+\nu_2(a-b)-\lceil\log_2k\rceil
+s_2(k)+\delta(k),
\end{align}
where $\delta(4)=2$, $\delta(k)=1$ if $k>4$ is a power of 2,
and $\delta(k)=0$ otherwise. In particular,
\begin{align}\label{c1}
\nu_2(S(c2^{n+1},k)-S(c2^{n},k))=n-\lceil\log_2k\rceil
+s_2(k)+\delta(k).
\end{align}
\end{thm}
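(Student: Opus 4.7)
The plan is to begin from the explicit formula
$$k!\,S(m,k)=\sum_{j=0}^{k}(-1)^{k-j}\binom{k}{j}\,j^{m},$$
which yields
$$k!\,[S(a2^n,k)-S(b2^n,k)]=\sum_{j=1}^{k}(-1)^{k-j}\binom{k}{j}\,j^{b2^n}\bigl(j^{(a-b)2^n}-1\bigr).$$
The first reduction is to discard the even-$j$ terms: assuming $b\ge 1$ and $k\le 2^n$, every even $j\ge 2$ gives $\nu_2(j^{b2^n})\ge 2^n$, which dominates the target valuation $n+\nu_2(a-b)+O(k)$. The $j=1$ term vanishes. So only the odd $j\ge 3$ terms contribute modulo an appropriately large power of $2$.

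For each odd $j\ge 3$, the lifting-the-exponent lemma yields
$$\nu_2\bigl(j^{(a-b)2^n}-1\bigr)=n+\nu_2(a-b)-1+\nu_2(j^2-1).$$
I would write $j^{(a-b)2^n}-1=2^{n+\nu_2(a-b)-1+\nu_2(j^2-1)}u_j$ with $u_j$ an odd $2$-adic unit, absorb the auxiliary factor $j^{b2^n}$ (which differs from $1$ by a high-order $2$-adic quantity) into $u_j$, and reduce the problem to establishing
$$\nu_2(\Sigma_k)=k-\lceil\log_2 k\rceil+\delta(k)+1,\quad\text{where}\quad \Sigma_k := \sum_{\substack{3\le j\le k\\ j\text{ odd}}}(-1)^{k-j}\binom{k}{j}\,2^{\nu_2(j^2-1)}u_j.$$
Combined with $\nu_2(k!)=k-s_2(k)$, this would produce exactly \eqref{c2}.

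The heart of the proof is this identity for $\nu_2(\Sigma_k)$. Kummer's theorem on $\nu_2(\binom{k}{j})$ and the factorisation $\nu_2(j^2-1)=\nu_2(j-1)+\nu_2(j+1)$ give term-by-term lower bounds, but the minimum of $\nu_2(\binom{k}{j})+\nu_2(j^2-1)$ is typically attained at several odd $j$ simultaneously, and the resulting alternating sum undergoes substantial further cancellation. To control this, I would use the iterated factorisation
$$j^{d\cdot 2^n}-1=(j^{d'}-1)(j^{d'}+1)\prod_{r=1}^{n+\nu_2(d)-1}\bigl(j^{d'\cdot 2^r}+1\bigr)$$
(with $d=a-b$ and $d'$ its odd part) together with the higher-order congruences $(j^{2^r}+1)/2\equiv 1\pmod{2^{r+1}}$ for $r\ge 1$ to pin down the residues of the $u_j$ modulo the required power of $2$, and then carry out a case analysis on the binary expansion of $k$.

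The principal obstacle is showing that this cumulative cancellation matches the formula exactly on the nose, with the three regimes encoded by $\delta(k)$ (generic $k$; powers of $2$ greater than $4$; and $k=4$) corresponding to three different cancellation patterns that must be tracked separately. The exclusion of $k=2^t-1$ in Theorem \ref{thm 4} is precisely because in those Mersenne cases the cumulative cancellation becomes too intricate to be captured by a single uniform formula, which is why that sub-case of Lengyel's conjecture still remains open.
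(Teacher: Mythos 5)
Your route (the explicit formula $k!\,S(m,k)=\sum_j(-1)^{k-j}\binom{k}{j}j^m$ plus lifting-the-exponent) is genuinely different from the paper's, but it is not a proof: the central claim $\nu_2(\Sigma_k)=k-\lceil\log_2k\rceil+\delta(k)+1$ is exactly the theorem in disguise, and you leave it as a plan rather than an argument. Note the scale mismatch: each individual term of $\Sigma_k$ has valuation $\nu_2\binom{k}{j}+\nu_2(j^2-1)=O(\log k)$ by Kummer, while the claimed valuation of the sum is of order $k$; so essentially the entire content of the theorem is an exact cancellation identity across roughly $k/2$ alternating terms, controlled to the last bit and split into the three $\delta$-regimes. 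The tools you name (term-by-term Kummer bounds, the congruences $(j^{2^r}+1)/2\equiv1\pmod{2^{r+1}}$, ``a case analysis on the binary expansion of $k$'') give lower bounds and residue information but no mechanism forcing equality, and you acknowledge this yourself as the ``principal obstacle.'' The paper never faces such a sum: it applies Junod's congruence for Bell polynomials (Lemma 2.8) with $m=(2b-a)2^n$ to get $S(a2^n,k)-S(b2^n,k)\equiv\sum_{i=1}^{\lceil k/2\rceil-1}\binom{(a-b)2^n}{i}S(b2^n-i,k-2i)\pmod{2^{n+\nu_2(a-b)}}$, and then the valuation of every summand is read off from Lemma 2.7, Kummer, Lemmas 2.5 and 2.9, and crucially Lemma 5.1 (which rests on Theorems 1.1 and 1.4); a \emph{unique} summand of minimal valuation emerges, so no cancellation has to be tracked at all. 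Your proposal uses none of this machinery and supplies no substitute for it.

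There is also a quantitative flaw in your first reduction. The threshold you must beat is the valuation of $k!\,[S(a2^n,k)-S(b2^n,k)]$, namely $k+n+\nu_2(a-b)-\lceil\log_2k\rceil+\delta(k)$, whereas an even $j$ contributes only $\nu_2\binom{k}{j}+b2^n\nu_2(j)$, which is \emph{independent of $a-b$}. Since $k\le 2^n$ and $\nu_2(a-b)$ is unbounded, $b2^n$ does not dominate this threshold in general: for instance with $k=5$, $b=1$ the $j=2$ term has valuation $\nu_2\binom{5}{2}+2^{n+1}$... rather, $1+b2^{n+1}$-type size fixed in $a$, and once $\nu_2(a-b)$ is large it sits \emph{below} the target, so the even part cannot simply be discarded (and your ``$O(k)$'' bookkeeping hides exactly this). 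Any repair must either impose a relation such as $b2^n$ large compared with $n+\nu_2(a-b)$ (equivalently stay in the regime the paper's choice $m=(2b-a)2^n\ge0$ in Junod's congruence tacitly favours) or analyse the even-$j$ block separately; as written, the step ``only odd $j$ contribute'' fails for large $\nu_2(a-b)$, and with it the reduction to $\Sigma_k$.
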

By Theorem \ref{thm 4}, one knows that Conjecture 1.1 holds except
when $k$ is a power of 2 minus 1.

In order to prove Theorem \ref{thm 4}, one needs a special case of
the 2-adic valuation of $S(n,k)$, which can be stated as follows.
\begin{thm}\label{thm 3}
Let $a,b,c,m,n\in \mathbb{Z}^+$, $1\leq a<2^{n+1}$,
$m\geq n+2+\lfloor\log_2b\rfloor$ and $c\geq1$ being odd. Then
  $$
  \nu_2(S(c2^m+b2^{n+1}+2^n,b2^{n+2}+a))
  \left\{
  \begin{array}{lc}
   =n, & \text{if} ~a=2^{n+1}-1,\\
\geq s_2(a), & \text{if}~ a<2^{n+1}-1.
  \end{array}
  \right.
  $$
\end{thm}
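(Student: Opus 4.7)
The first step is to put ourselves in position to invoke Theorem~\ref{thm 1}. Write
\begin{align*}
N := c2^m + b2^{n+1} + 2^n = \bigl(c\,2^{m-n} + 2b + 1\bigr)\,2^n =: c'\,2^n,
\end{align*}
which is legal since $m\geq n+2+\lfloor\log_2 b\rfloor>n$, and $c' = c\,2^{m-n}+2b+1$ is odd because $c$ is odd and $m>n$. Writing
\begin{align*}
K := b\,2^{n+2}+a = (2b)\,2^{n+1}+a,\qquad 0<a<2^{n+1},
\end{align*}
the hypothesis $m\geq n+2+\lfloor\log_2 b\rfloor$ gives $2^{m-n}\geq 2b+1$ and hence $N\geq K$. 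Applying Theorem~\ref{thm 1} with $(c,b)$ replaced by $(c',2b)$ yields
\begin{align*}
\nu_2\bigl(S(N,K)\bigr)\geq s_2(a)-1.
\end{align*}
This already establishes the required inequality when $a=2^{n+1}-1$ (since $s_2(a)-1=n$). Hence it remains to (i) prove the matching upper bound $\nu_2(S(N,K))\leq n$ when $a=2^{n+1}-1$, and (ii) sharpen the lower bound from $s_2(a)-1$ to $s_2(a)$ when $a<2^{n+1}-1$.

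For both refinements I would return to the explicit formula
\begin{align*}
K!\,S(N,K) = \sum_{j=0}^{K}(-1)^{K-j}\binom{K}{j}\,j^{N}
\end{align*}
and split the sum according to $\nu_2(j)$. Because $N\geq 2^n$, every term with $j$ even carries a 2-adic valuation at least $\nu_2(j)\cdot 2^n$, which vastly exceeds the target $\nu_2(K!)+n$; so, modulo a sufficiently large power of $2$, only the odd-$j$ terms matter. For odd $j$, the value of $j^{N}$ modulo any prescribed power of $2$ depends only on $N$ modulo a smaller power, and the hypothesis $m\geq n+2+\lfloor\log_2 b\rfloor$ pins that residue down explicitly. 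The whole problem is thereby reduced to a finite congruence among binomial coefficients $\binom{K}{j}$ with $j$ odd, which can be attacked through Lucas' theorem and a Kummer-style carry count, exploiting the decomposition $K=b\,2^{n+2}+a$ and the fine structure of the binary expansion of $a$.

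The main obstacle is the sharpness claim in case $a=2^{n+1}-1$: one must show that the odd-$j$ alternating sum has 2-adic valuation exactly $K-s_2(b)-1$, not higher. This is a delicate non-vanishing assertion requiring a distinguished odd-$j$ summand to survive all cancellations---and it is here that the all-ones block structure of $a$ is essential, producing a unique surviving Kummer-minimal term. For $a<2^{n+1}-1$, by contrast, the existence of a zero digit in the bottom $n+1$ bits of $a$ forces a pairwise cancellation among adjacent odd-$j$ indices that delivers the extra factor of $2$. The full strength of $m\geq n+2+\lfloor\log_2 b\rfloor$ is used throughout to guarantee that the even-$j$ contributions cannot interfere with the leading odd-$j$ coefficient modulo $2^{n+1}$.
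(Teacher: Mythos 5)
Your opening reduction is fine: writing $N=c2^m+b2^{n+1}+2^n=c'2^n$ with $c'$ odd and $K=(2b)2^{n+1}+a$ and invoking Theorem~\ref{thm 1} does give $\nu_2(S(N,K))\ge s_2(a)-1$. But that only reproduces the bound of Theorem~\ref{thm 1}; the entire content of Theorem~\ref{thm 3} is the refinement (exact value $n$ when $a=2^{n+1}-1$, and the extra unit $\ge s_2(a)$ otherwise), and for that part you offer only a sketch whose central quantitative premise is wrong. To read off $\nu_2(S(N,K))$ from $K!\,S(N,K)=\sum_{j}(-1)^{K-j}\binom{K}{j}j^{N}$ you must control the alternating sum modulo $2^{\nu_2(K!)+n+1}$, i.e.\ to precision roughly $K-s_2(K)+n\approx b2^{n+2}$, whereas each individual odd-$j$ term has $\nu_2\bigl(\binom{K}{j}\bigr)=s_2(j)+s_2(K-j)-s_2(K)=O(\log_2K)$. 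So the sum exhibits cancellation to a depth enormously greater than the valuation of any single term; it is not ``a finite congruence among binomial coefficients'' decidable by Lucas/Kummer, and there is no ``distinguished surviving Kummer-minimal term'' in the sense you describe. Moreover, for odd $j$ the residue of $j^{N}$ modulo $2^{M}$ requires $N$ modulo $2^{M-2}$ with $M\approx K$, so the hypothesis on $m$ does not ``pin the residue down'' at the needed precision. The claimed pairwise cancellation for $a<2^{n+1}-1$ and the non-vanishing assertion for $a=2^{n+1}-1$ are exactly the hard points, and nothing in the proposal establishes them.

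The paper takes a different and workable route: it expands $S(c2^m+b2^{n+1}+2^n,\,b2^{n+2}+a)$ by the convolution identity of Lemma~\ref{lem 0} into a double sum of terms $h(i,j)$, splits the range of $j$ into five pieces $\Delta_1,\dots,\Delta_4,\Delta$, bounds each piece using Theorems~\ref{thm 1} and~\ref{thm 2} together with Lemmas~\ref{lem -1}--\ref{lem 2}, and obtains the exact value in the case $a=2^{n+1}-1$ by counting (via Lemma~\ref{lem 5}) how many summands attain the minimal valuation ($2^{s_2(a)}-1$ or $2^{s_2(a)-1}$ of them, whose parity decides whether the minimum survives), the whole argument being organized as an induction on $s_2(b)$. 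If you want to salvage your plan, you would need either to adopt such a convolution-plus-induction structure or to supply a genuinely new analysis of the deep cancellation in the explicit formula; as written, the proof has a gap precisely where the theorem's content lies.
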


This paper is organized as follows. Some preliminary results are presented
in Section 2. Then the proofs of Theorems 1.1 and 1.2 are given in Section 3.
Consequently, Section 4 is devoted to the proof of Theorem 1.4. Finally,
in Section 5, one uses Theorems 1.1 and 1.4 to show Theorem 1.3.

\section{Lemmas}

Several well-known results, which are needed for the proofs of
the main results, are given in this section.

\begin{lem}\label{lem -1} \cite{[R]} (Legendre)
Let $n\in\mathbb{N}$. Then $\nu_2\big(n!)=n-s_2(k).$
\end{lem}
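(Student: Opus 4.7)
The plan is to deduce the formula from Legendre's (de Polignac's) classical expression $\nu_2(n!) = \sum_{k \geq 1} \lfloor n/2^k \rfloor$. First I would establish this expression by a double-counting argument: in the product $1 \cdot 2 \cdots n$, each integer $m \leq n$ contributes $\nu_2(m)$ to $\nu_2(n!)$, and $\nu_2(m) = \#\{k \geq 1 : 2^k \mid m\}$, so summing over $m$ and interchanging the order of summation gives $\nu_2(n!) = \sum_{k \geq 1} \#\{m \leq n : 2^k \mid m\} = \sum_{k \geq 1} \lfloor n/2^k \rfloor$.

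Next, writing the binary expansion $n = \sum_{i=0}^{L} a_i 2^i$ with $a_i \in \{0,1\}$, I compute $\lfloor n/2^k \rfloor = \sum_{i \geq k} a_i 2^{i-k}$ for each $k \geq 1$. Interchanging summations then yields
$$
\nu_2(n!) \;=\; \sum_{k \geq 1} \sum_{i \geq k} a_i 2^{i-k} \;=\; \sum_{i=1}^{L} a_i \sum_{k=1}^{i} 2^{i-k} \;=\; \sum_{i=1}^{L} a_i (2^i - 1) \;=\; n - s_2(n),
$$
where the last step uses the identities $\sum_{i=0}^{L} a_i 2^i = n$ and $\sum_{i=0}^{L} a_i = s_2(n)$, together with the fact that the $i=0$ term contributes $a_0(2^0 - 1) = 0$.

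Since this is a classical and essentially elementary identity, there is no substantive obstacle in the argument; the only care required is in the interchange of summations (legitimate because every sum involved is finite) and in the geometric sum $1 + 2 + \cdots + 2^{i-1} = 2^i - 1$. I note in passing that the printed right-hand side contains a typographical slip: it should read $n - s_2(n)$ rather than $n - s_2(k)$, since the variable $k$ does not appear on the left.
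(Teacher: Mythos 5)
Your proof is correct and is the standard argument for Legendre's formula (the paper gives no proof of its own, simply citing Robert's book, where essentially this same derivation via $\nu_2(n!)=\sum_{k\ge1}\lfloor n/2^k\rfloor$ and the binary expansion appears). You are also right that the statement as printed contains a typo: the right-hand side should be $n-s_2(n)$, not $n-s_2(k)$.
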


\begin{lem}\label{lem -2}\cite{[Ku]} (Kummer)
Let $k$ and $n\in\mathbb{N}$ be such that $k\le n$. Then
$\nu_2({n\choose k})=s_2(k)+s_2(n-k)-s_2(n).$ Moreover,
$s_2(k)+s_2(n-k)\ge s_2(n).$
\end{lem}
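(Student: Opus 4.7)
The plan is to attack the claim through the explicit formula
\[
S(N,K)=\frac{1}{K!}\sum_{j=1}^{K}(-1)^{K-j}\binom{K}{j}\,j^{N},
\]
with $N=c2^{m}+b2^{n+1}+2^{n}$ and $K=b2^{n+2}+a$. Two preparatory facts are used throughout. First, because $a<2^{n+1}$ the binary supports of $b2^{n+2}$ and $a$ are disjoint, so $s_{2}(K)=s_{2}(b)+s_{2}(a)$, and Lemma \ref{lem -1} then gives $\nu_{2}(K!)=K-s_{2}(b)-s_{2}(a)$. Second, the hypothesis $m\geq n+2+\lfloor\log_{2}b\rfloor$ forces $c2^{m}$, $b2^{n+1}$, and $2^{n}$ to occupy pairwise disjoint binary windows, so $\nu_{2}(N)=n$ and $s_{2}(N)=s_{2}(c)+s_{2}(b)+1$.

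The first step is to dispose of all even $j$. For any $j$ with $\nu_{2}(j)\geq 1$ the factor $j^{N}$ alone contributes a $2$-adic valuation of at least $N\geq 2^{m}$, which vastly exceeds the target bounds $n$ and $s_{2}(a)\leq n+1$; after division by $K!$ the corresponding contribution to $\nu_{2}(S(N,K))$ is bounded below by $N-K+s_{2}(K)$, still far larger than anything required. So the whole proof reduces to analyzing the odd-$j$ subsum
\[
T_{\mathrm{odd}}=\sum_{\substack{j=1 \\ j\text{ odd}}}^{K}(-1)^{K-j}\binom{K}{j}\,j^{N}.
\]

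The second step is to expand $j^{N}$ around $j=1$. Writing $j-1=2l$ for odd $j$ gives $j^{N}=(1+2l)^{N}=\sum_{s\ge 0}\binom{N}{s}2^{s}l^{s}$, and swapping summations produces
\[
T_{\mathrm{odd}}=-(-1)^{K}\sum_{s\ge 0}2^{s}\binom{N}{s}\sum_{l=0}^{(K-1)/2}\binom{K}{2l+1}\,l^{s}.
\]
The inner $l$-sum can be opened via the Stirling transform $l^{s}=\sum_{r}r!\,S(s,r)\binom{l}{r}$, after which sums of the shape $\sum_{l}\binom{K}{2l+1}\binom{l}{r}$ are reducible by standard binomial-coefficient manipulations (e.g.\ the identity $(2l+1)\binom{K}{2l+1}=K\binom{K-1}{2l}$) to closed expressions whose valuations are controlled by Lemma \ref{lem -2}. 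At this point every constituent factor has a $2$-adic valuation computable through Kummer and Legendre.

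The main obstacle is the third step: pinpointing the single value of $s$ (and within it, the single $l$) that realizes the minimum $2$-adic valuation, and ruling out cancellation against neighboring terms. When $a=2^{n+1}-1$, the binary pattern of $K$ is saturated in its bottom $n+1$ bits; I expect a unique leading $s$ (plausibly related to $s=2^{n+1}$, where the $2^{s}$ multiplier meets the minimal Kummer value of $\nu_{2}(\binom{N}{s})$) to contribute a valuation of exactly $\nu_{2}(K!)+n$, yielding the asserted equality $\nu_{2}(S(N,K))=n$. When $a<2^{n+1}-1$, some bit of $K$ between positions $0$ and $n$ is zero, and a pairing $l\leftrightarrow (K-1)/2-l$, or a parity shift in $s$, should exhibit an extra factor of $2$ and push the lower bound up to $s_{2}(a)$. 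The delicate verification of non-cancellation in the critical $s$ is where I anticipate the hardest bookkeeping, requiring case analysis on the binary expansion of $b$ together with the precise Kummer formulas for $\nu_{2}(\binom{N}{s})$ and $\nu_{2}(\binom{K}{2l+1})$.
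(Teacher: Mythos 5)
Your proposal does not address the statement at all. The statement to be proved is Kummer's classical result that $\nu_2\bigl(\binom{n}{k}\bigr)=s_2(k)+s_2(n-k)-s_2(n)$ together with the inequality $s_2(k)+s_2(n-k)\ge s_2(n)$. What you have written is instead an attack on the entirely different assertion of Theorem \ref{thm 3}, concerning $\nu_2\bigl(S(c2^m+b2^{n+1}+2^n,\,b2^{n+2}+a)\bigr)$: your $N$ and $K$ are exactly the arguments appearing there, and your target values ($=n$ when $a=2^{n+1}-1$, $\ge s_2(a)$ otherwise) are that theorem's conclusion. Worse, your sketch explicitly \emph{invokes} Lemma \ref{lem -2} --- the very statement you are supposed to be proving --- as a known tool in its second and third steps, so even if the sketch were completed it would be circular as a proof of the lemma. (As an attempt at Theorem \ref{thm 3} it is also only a programme, not a proof: the third step, identifying the unique minimizing term and excluding cancellation, is left entirely conjectural.)

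The statement itself needs no such machinery. The paper cites it to Kummer without proof because it follows in one line from Legendre's formula (Lemma \ref{lem -1}): writing $\nu_2\bigl(\binom{n}{k}\bigr)=\nu_2(n!)-\nu_2(k!)-\nu_2((n-k)!)$ and substituting $\nu_2(m!)=m-s_2(m)$ gives
\begin{align*}
\nu_2\Bigl(\binom{n}{k}\Bigr)=\bigl(n-s_2(n)\bigr)-\bigl(k-s_2(k)\bigr)-\bigl((n-k)-s_2(n-k)\bigr)=s_2(k)+s_2(n-k)-s_2(n),
\end{align*}
and the inequality $s_2(k)+s_2(n-k)\ge s_2(n)$ is immediate because $\binom{n}{k}$ is an integer, so its $2$-adic valuation is nonnegative. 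You should discard the Stirling-number apparatus entirely and give this short argument.
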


\begin{lem}\cite{[Le2]}\label{lem 1} 
Let $k,n,c\in\mathbb{N}$ and $1\leq k\leq 2^{n}$. Then
$\nu_2(S(c2^n,k))=s_2(k)-1.$
\end{lem}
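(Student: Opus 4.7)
My plan is induction on $c$, with Wannemacker's theorem $\nu_2(S(2^n,k))=s_2(k)-1$ serving as the base case $c=1$ \cite{[W]}. A preliminary reduction disposes of even $c$: writing $c=2^a c'$ with $c'$ odd gives $c\cdot 2^n=c'\cdot 2^{n+a}$, and since $k\le 2^n\le 2^{n+a}$ the odd-multiplier statement at level $n+a$ already suffices. So I may assume throughout that $c$ is odd, and the task is to establish
\[
\nu_2\bigl(S(c\cdot 2^n,k)-S(2^n,k)\bigr)\ge s_2(k),
\]
which, combined with Wannemacker, pins $\nu_2(S(c\cdot 2^n,k))$ at $s_2(k)-1$ exactly.

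Using the explicit formula \eqref{1.1} and Legendre's $\nu_2(k!)=k-s_2(k)$ (Lemma~\ref{lem -1}), this is equivalent to
\[
\nu_2\!\left(\sum_{j=2}^{k}(-1)^{k-j}\binom{k}{j}\,j^{2^n}\bigl(j^{(c-1)2^n}-1\bigr)\right)\ge k.
\]
For even $j\ge 2$ the factor $j^{2^n}$ already contributes $\nu_2\ge 2^n\ge k$, killing such terms modulo $2^k$. For odd $j\ge 3$, since $(c-1)2^n$ is even I apply the $2$-adic lifting-the-exponent lemma:
\[
\nu_2\bigl(j^{(c-1)2^n}-1\bigr)=\nu_2(j-1)+\nu_2(j+1)+n+\nu_2(c-1)-1\ge n+2.
\]
Combining this with Kummer's bound on $\nu_2\bigl(\binom{k}{j}\bigr)$ from Lemma~\ref{lem -2} closes the argument whenever $k\le n+2$.

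The main obstacle is the regime $k>n+2$, where individual odd terms can carry valuation below $k$ (for example $n=3$, $k=7$, $j=3$ gives $\nu_2=6<7$), so cancellations across odd $j$ are unavoidable. The route I would take is to expand $j^{(c-1)2^n}-1=\sum_{i\ge 1}\binom{c-1}{i}(j^{2^n}-1)^i$, further expand each $(j^{2^n}-1)^i$ by the binomial theorem, and swap the sums so that the inner sum over $j$ becomes the finite difference $\Delta^{k} x^{2^n(l+1)}\big|_{x=0}=k!\,S(2^n(l+1),k)$. This recasts the target congruence modulo $2^k$ as a $\mathbb{Z}$-linear identity among the values $S(2^n(l+1),k)$ for $1\le l+1\le c$; isolating the $l+1=c$ contribution (which reproduces $S(c\cdot 2^n,k)$ itself) and invoking the strong inductive hypothesis on the smaller $l+1<c$ terms, together with the identity $\sum_{l=0}^{i}(-1)^{i-l}\binom{i}{l}=0$ for $i\ge 1$, should close the induction. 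The delicate point is arranging this bookkeeping to avoid circularity, since the $l+1=c$ term literally reproduces the quantity being controlled; an alternative is to pair odd $j$ by residue class modulo $4,8,\ldots$ and use the precise LTE formula above to show the leading $2$-adic contributions cancel within pairs, which was the mechanism behind the example $k=7$, $n=3$.
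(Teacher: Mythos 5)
There is a genuine gap. First, note that the paper itself offers no proof of Lemma \ref{lem 1}: it is imported from Lengyel \cite{[Le2]} (the base case $c=1$ being Wannemacker \cite{[W]}), so your attempt is to be measured against that external argument, which runs by induction on $c$ using the convolution identity of Lemma \ref{lem 0} together with Legendre--Kummer estimates, in the same style as this paper's proof of Theorem \ref{thm 1}. Your preliminary steps are sound: the reduction of even $c$ to odd $c$ at level $n+a$, the reformulation via \eqref{1.1} and Lemma \ref{lem -1} as $\nu_2\bigl(\sum_{j\ge 2}(-1)^{k-j}\binom{k}{j}j^{2^n}(j^{(c-1)2^n}-1)\bigr)\ge k$, the disposal of even $j$ using $2^n\ge k$, and the lifting-the-exponent estimate for odd $j$. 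But this term-by-term argument only covers $k\le n+2$, and the substance of the lemma lies in the complementary range $n+3\le k\le 2^n$, which for large $n$ is almost the entire range; your own example ($n=3$, $k=7$, $j=3$) shows individual terms genuinely fall below the target there.

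For that main regime you offer only a sketch, and it does not close. After expanding $j^{(c-1)2^n}-1=\sum_{i\ge1}\binom{c-1}{i}(j^{2^n}-1)^i$ and swapping sums, what you obtain is exactly Newton's forward-difference formula for $g(l)=S(l2^n,k)$, namely $S(c2^n,k)-S(2^n,k)=\sum_{i=1}^{c-1}\binom{c-1}{i}\Delta^ig(1)$; this identity is true for trivial combinatorial reasons and carries no $2$-adic information by itself. To conclude you would need $\nu_2(\Delta^ig(1))\ge s_2(k)$ for every $i\ge1$, but the inductive hypothesis only gives that each $S(l2^n,k)$ with $l<c$ has valuation exactly $s_2(k)-1$, hence each difference is only guaranteed valuation $\ge s_2(k)-1$, one short of what is needed; the identity $\sum_{l=0}^{i}(-1)^{i-l}\binom{i}{l}=0$ does not upgrade this, since the values $S(l2^n,k)$ are not constant in $l$; and the top term $i=l=c-1$ reintroduces $S(c2^n,k)$ itself, the circularity you acknowledge. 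Controlling those higher differences is a statement of the same depth as (indeed stronger than) the difference results of \cite{[Le2]} and Theorem \ref{thm 4}, so it cannot be assumed in passing. The fallback of pairing odd $j$ by residue classes modulo $4,8,\ldots$ is likewise only an idea: no mechanism is given for why the leading contributions cancel in general, and one worked example is not an argument. If you want a self-contained proof in the spirit of this paper, the workable route is the one behind Theorem \ref{thm 1}: write $c2^n=(c-1)2^n+2^n$, expand with Lemma \ref{lem 0}, bound every term by Lemmas \ref{lem -1} and \ref{lem -2} to get $\ge s_2(k)-1$, and then identify the unique term attaining the minimum so that the valuation is exact.
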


\begin{lem}\cite{[Le2]}\label{lem 2} 
 Let $k,n,c\in\mathbb{N}$, $2^n< k<2^{n+1}-1$ and $c\geq 3$ be an odd integer. Then
$\nu_2(S(c2^n,k))\geq s_2(k)$ and $\nu_2(S(c2^n,2^{n+1}-1))=n.$
\end{lem}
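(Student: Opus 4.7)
The plan is to attack this via the explicit formula \eqref{1.1}. Write
\[
k!\,S(c 2^n, k) \;=\; \sum_{j=1}^{k}(-1)^{k-j}\binom{k}{j}\, j^{c 2^n},
\]
and split by parity of $j$. For even $j \ge 2$, $\nu_2(j^{c 2^n}) \ge c 2^n$ is enormous, so these terms are negligible modulo any moderate power of $2$. For odd $j \ge 3$, Lifting-the-Exponent (since $c$ is odd and $c 2^n$ is even) gives $\nu_2(j^{c 2^n} - 1) = \nu_2(j-1) + \nu_2(j+1) + n - 1 \ge n+2$. Setting $j^{c 2^n} = 1 + \delta_j$ for odd $j \ge 3$, using $\sum_{j\ \text{odd},\,j\le k}\binom{k}{j} = 2^{k-1}$ and noting that $(-1)^{k-j}$ is constantly $(-1)^{k-1}$ on odd $j$, one obtains
\[
k!\,S(c 2^n, k) \;\equiv\; (-1)^{k-1}\bigl(2^{k-1} + E_k\bigr) \pmod{2^{c 2^n}},\qquad E_k := \sum_{\substack{j\ \text{odd}\\ 3\le j\le k}}\binom{k}{j}\,\delta_j.
\]

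For part (ii), take $k = 2^{n+1}-1$: every $\binom{k}{j}$ is odd by Kummer (Lemma \ref{lem -2}), and the main term $2^{k-1}$ has $\nu_2 = 2^{n+1}-2 = \nu_2(k!) + n$ by Legendre (Lemma \ref{lem -1}). The claim would follow once I show $\nu_2(E_k) > 2^{n+1}-2$, so that the main term dictates the valuation and dividing by $k!$ yields exactly $n$. To verify this, I would refine LTE one order further, writing $\delta_j \equiv (j^2 - 1)\,c\,2^{n-1}\,u_j$ modulo higher powers of $2$ for $2$-adic units $u_j$, and then evaluate $\sum_{j\ \text{odd},\, 3\le j \le k}\binom{k}{j}(j^2 - 1)$ modulo small powers of $2$ via standard binomial identities; the fact that all $\binom{k}{j}$ are odd simplifies the bookkeeping considerably.

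For part (i) with $2^n < k < 2^{n+1}-1$, the main term $2^{k-1}$ has $\nu_2 = k-1 < k$, so the correction $E_k$ must share that valuation and partially cancel against $2^{k-1}$ to produce $\nu_2 \ge k$. Kummer's theorem again controls $\nu_2\bigl(\binom{k}{j}\bigr)$ through the binary expansion of $k$ (which now has at least one zero bit). \textbf{The main obstacle} is establishing these cancellations with enough precision: crude term-by-term valuations of $\delta_j$ and $\binom{k}{j}$ are not strong enough, and one must track the refined LTE expansion together with alternating binomial sums. A cleaner alternative, which sidesteps most of the cancellation analysis, is induction on the odd integer $c \ge 3$: the base case $c = 3$ is handled by direct LTE expansion, and the inductive step uses the recurrence together with identities for $S(c 2^n, k) - S((c-2) 2^n, k)$ to reduce to Stirling numbers with argument $\le 2^n$, where Lemma \ref{lem 1} pins down the valuation.
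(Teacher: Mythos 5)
First, a remark on scope: the paper does not prove this lemma at all --- it is quoted from Lengyel \cite{[Le2]} as a known input (it is exactly the $b=0$, $2^n<a<2^{n+1}$ base case of Theorem \ref{thm 1}), so there is no in-paper proof to compare against. Judged on its own, your proposal sets up a correct framework (the parity split of $\sum_j(-1)^{k-j}\binom{k}{j}j^{c2^n}$, the LTE bound $\nu_2(j^{c2^n}-1)\ge n+2$, and the observation that for $k=2^{n+1}-1$ the main term $2^{k-1}$ has exactly the target valuation $\nu_2(k!)+n$), but the decisive estimate is missing and the method you propose for it cannot deliver it. You need $\nu_2(E_k)>2^{n+1}-2$, a bound that grows \emph{exponentially} in $n$, whereas each summand $\binom{k}{j}\delta_j$ is only guaranteed valuation about $n+2$, and ``refining LTE one order further'' only improves the termwise/leading-order control to roughly $n+4$. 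The cancellation you need lives across the whole binomial expansion $j^{c2^n}=(1+(j^2-1))^{c2^{n-1}}=\sum_i\binom{c2^{n-1}}{i}(j^2-1)^i$, and must be extracted from the vanishing of $\sum_j(-1)^{k-j}\binom{k}{j}P(j)$ for $\deg P<k$ applied to every order $i$; even then, the naive bounds $\nu_2\binom{c2^{n-1}}{i}\ge n-1-\nu_2(i)$ and $\nu_2\big(\sum_{j\ \mathrm{odd}}\binom{k}{j}(j^2-1)^i\big)\ge k-O(\log i)$ fail to clear the threshold for $i$ near $2^{n-1}$, so additional structure is required. For part (i) you concede outright that the cancellation is not established, and there the situation is worse because the main term $2^{k-1}$ no longer has the right valuation, so the entire answer comes from cancellations you have not controlled.

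The alternative you gesture at (induction on $c$) is the right family of ideas --- it is essentially how this paper handles the generalization in Theorem \ref{thm 1}, via the convolution identity of Lemma \ref{lem 0} with the split $c2^n=(c-1)2^n+2^n$ and Lemma \ref{lem 1} --- but as stated it also has a gap. Writing $S(c2^n,k)=\sum_{j}\sum_{i}\binom{j}{i}\frac{(k-i)!}{(k-j)!}S((c-1)2^n,k-i)S(2^n,j)$ and bounding termwise via Lemmas \ref{lem -1}, \ref{lem -2} and \ref{lem 1} only yields $\nu_2\ge s_2(k)-2$, not $s_2(k)$: for example with $k=2^n+1$ the diagonal terms $(i,j)=(1,1)$ and $(i,j)=(2^n,2^n)$ each have valuation $0$ while the claimed answer is $\ge 2$, so one must exhibit and prove the cancellation among the minimal-valuation diagonal terms. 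Neither route, as written, addresses this, nor the complementary issue in the equality case $k=2^{n+1}-1$ of showing that exactly one contribution of valuation $n$ survives. So the proposal identifies the correct obstacle but does not overcome it; the lemma remains unproved by the argument given.
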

\begin{lem}\cite{[Le2]}\label{lem -3} 
 Let $m,n,c\in\mathbb{N}$ and  $0\le m<n$.  Then
$\nu_2(S(c2^n+2^m,2^{n}))=n-1-m.$
\end{lem}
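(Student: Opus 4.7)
The plan is to telescope the Stirling recurrence $2^m$ times on the first coordinate and then identify the main $2$-adic contribution via Lemma \ref{lem 1}. Iterating $S(N,k) = S(N-1, k-1) + k\,S(N-1, k)$ on the $S(N-1, k-1)$ branch gives
\begin{equation*}
S(c2^n + 2^m, 2^n) = S(c2^n,\, 2^n - 2^m) + \sum_{i=0}^{2^m - 1} (2^n - i)\, S(c2^n + 2^m - i - 1,\, 2^n - i).
\end{equation*}
Since $1 \leq 2^n - 2^m \leq 2^n$, Lemma \ref{lem 1} yields $\nu_2\bigl(S(c2^n,\, 2^n - 2^m)\bigr) = s_2(2^n - 2^m) - 1 = n - 1 - m$, which will be the main term.

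It then suffices to show each summand in the correction sum has $\nu_2$ strictly greater than $n - 1 - m$. The $i=0$ term acquires the factor $2^n$, so its valuation is at least $n$. For $1 \leq i \leq 2^m - 1$, I would combine $\nu_2(2^n - i) = \nu_2(i)$ with the Kummer-type identity $s_2(i) + s_2(2^n - i) = n + 1 - \nu_2(i)$ (a direct consequence of Lemma \ref{lem -2}), together with an auxiliary lower bound of the form
\begin{equation*}
\nu_2\bigl(S(c2^n + 2^m - i - 1,\, 2^n - i)\bigr) \geq s_2(2^n - i) - 1.
\end{equation*}
This produces $\nu_2 \geq \nu_2(i) + s_2(2^n - i) - 1 = n - s_2(i)$, which exceeds $n - 1 - m$ because $s_2(i) \leq s_2(2^m - 1) = m$ for $1 \leq i \leq 2^m - 1$.

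The hard part will be establishing this auxiliary lower bound. It fails for unrestricted arguments (for instance $\nu_2(S(10, 5)) = 0 < s_2(5) - 1 = 1$), so the proof must exploit the coupling $N - k = (c-1)2^n + 2^m - 1$ shared by every correction summand. I would handle this by a nested induction on $m$, with $n$ and $c$ fixed, reapplying the same telescoped identity to each intermediate Stirling number appearing in the correction sum and invoking Lemma \ref{lem 1} at the base of the recursion. Controlling the cases where two sub-summands acquire equal $\nu_2$ and could potentially cancel into an unexpectedly high valuation is the delicate point; the coupling $N-k$ constraint keeps the arguments aligned in a way that rules out such pathological cancellations.
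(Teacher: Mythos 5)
The paper never proves this lemma: it is imported verbatim from Lengyel \cite{[Le2]} (Lemma \ref{lem -3} carries a citation, not a proof), so there is no internal argument to compare yours against; your proposal has to stand on its own. Its outer shell is fine: the telescoped recurrence is correctly derived, the main term $S(c2^n,2^n-2^m)$ has valuation $s_2(2^n-2^m)-1=n-1-m$ by Lemma \ref{lem 1}, the $i=0$ term is handled, and the arithmetic reducing each remaining correction term to the auxiliary bound $\nu_2(S(c2^n+2^m-i-1,\,2^n-i))\ge s_2(2^n-i)-1$ is correct. But that auxiliary bound is where the entire difficulty of the lemma lives, and you do not prove it --- you only announce a strategy (nested induction on $m$, re-telescoping each intermediate Stirling number and bounding term by term via Lemma \ref{lem 1}). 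That is a genuine gap, not a routine detail.

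Moreover, the strategy as described provably cannot close the gap. Take $n=3$, $m=2$, $c=1$, $i=1$: the correction term involves $S(10,7)$, and the required bound is $\nu_2(S(10,7))\ge s_2(7)-1=2$ (in fact $\nu_2(S(10,7))=3$). Re-telescoping as you propose gives $S(10,7)=S(8,5)+6\,S(8,6)+7\,S(9,7)=1050+1596+3234$, whose three summands have $2$-adic valuations $1$, $2$, $1$. A term-by-term estimate therefore yields only $\nu_2\ge 1$, strictly below the target $2$; the true valuation $3$ arises precisely because the two valuation-$1$ terms cancel modulo higher powers of $2$. So the cancellation you hope to \emph{rule out} is in fact what you would need to \emph{establish}: the base terms of your nested expansion individually fall short of the required valuation, and no argument that bounds them separately and takes a minimum can succeed. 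To repair this you would need either a genuinely different identity whose individual terms already meet the bound (Lengyel's proof in \cite{[Le2]} proceeds along different lines), or a quantitative cancellation argument, e.g.\ via the alternating-sum formula (\ref{1.1}) or a pairing of terms --- none of which is present in the proposal.
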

\begin{lem}\cite{[W]}\label{lem 0} 
Let $k,n,m\in\mathbb{N}$ and $0\leq k\leq n+m$. Then
\begin{align*}
 S(n+m,k)=\sum_{j=1}^k\sum_{i=0}^j{j\choose i}\frac{(k-i)!}{(k-j)!}S(n,k-i)S(m,j).
\end{align*}
\end{lem}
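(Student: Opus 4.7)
The plan is to prove this convolution identity by a combinatorial double-counting argument, interpreting both sides as enumerations of set partitions of $\{1,\ldots,n+m\}$ into $k$ nonempty blocks; I will also sketch a parallel algebraic derivation via falling factorials as a cross-check.

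Set $A=\{1,\ldots,n\}$ and $B=\{n+1,\ldots,n+m\}$, so $S(n+m,k)$ counts partitions of $A\cup B$ into $k$ blocks. For each such partition, let $j$ be the number of blocks meeting $B$ and $i$ the number of those that lie entirely inside $B$ (the pure-$B$ blocks); then $j-i$ blocks are mixed (meet both $A$ and $B$) and $k-j$ are pure-$A$. To rebuild a partition with this profile, I would: (i) partition $B$ into $j$ blocks in $S(m,j)$ ways; (ii) mark which $i$ of them are pure-$B$ in $\binom{j}{i}$ ways, leaving $j-i$ labelled mixed slots; (iii) partition $A$ into $k-i$ blocks in $S(n,k-i)$ ways; and (iv) inject the $j-i$ mixed slots into the $k-i$ $A$-blocks in order to glue them, which is an ordered injection counted by $(k-i)(k-i-1)\cdots(k-j+1)=(k-i)!/(k-j)!$. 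Multiplying these four factors and summing over $0\le i\le j\le k$ (the $j=0$ term vanishes when $m\ge 1$ since $S(m,0)=0$) yields the stated right-hand side.

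As an algebraic cross-check I would start from $x^n=\sum_a S(n,a)(x)_a$ with $(x)_a$ the falling factorial, combined with the product formula $(x)_a(x)_b=\sum_{r\ge 0}\binom{a}{r}\binom{b}{r}r!\,(x)_{a+b-r}$; expanding $x^{n+m}=x^n\cdot x^m$, equating coefficients of $(x)_k$, and reindexing via $a=k-i$, $b=j$, $r=j-i$ reproduces the identity after the collapse $\binom{k-i}{j-i}(j-i)!=(k-i)!/(k-j)!$. The main subtlety is step (iv) of the combinatorial argument: because the $j-i$ mixed $B$-blocks were individually labelled in step (ii), assigning $A$-blocks to them must be counted as an ordered injection rather than an unordered subset, which is precisely what produces $(k-i)!/(k-j)!$ in place of a binomial coefficient. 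Getting this asymmetry right between the pure-$B$ and the mixed components of the $j$ total $B$-blocks is the only real obstacle in an otherwise routine double count.
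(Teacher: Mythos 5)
Your proof is correct. Note that the paper itself gives no proof of this lemma --- it is imported verbatim from De Wannemacker's paper \cite{[W]} --- so there is nothing internal to compare against; judged on its own, your double count is sound. The profile $(i,j)$ (pure-$B$ blocks among the $j$ blocks meeting $B$) determines each partition of $\{1,\dots,n+m\}$ uniquely from the data in steps (i)--(iv), and you correctly identify the one delicate point: the $j-i$ mixed $B$-parts are distinguishable as sets, so gluing them to distinct $A$-blocks is an ordered injection, giving $(k-i)!/(k-j)!$ rather than a binomial coefficient. Your algebraic cross-check via $x^{n+m}=x^n\cdot x^m$, the expansion $x^n=\sum_a S(n,a)(x)_a$, and the product rule for falling factorials is also valid and is essentially the standard derivation one finds in the literature; the reindexing $a=k-i$, $b=j$, $r=j-i$ and the collapse $\binom{k-i}{j-i}(j-i)!=(k-i)!/(k-j)!$ check out. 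One pedantic remark: as stated (with the $j$-sum starting at $1$) the identity tacitly requires $m\ge 1$, since for $m=0$ the right-hand side is $0$ while the left is $S(n,k)$; your parenthetical about the vanishing $j=0$ term is the right observation, but you could flag that the lemma is only applied in the paper with $m\ge 1$ (indeed $m$ is taken to be $2^n$ or $c2^m$ there), so this causes no harm.
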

\begin{lem}\cite{[AD]}\label{lem 3}
For $r\geq \max(k_1,k_2)+2,$ one has
$$\frac{k_1!k_2!(r-1)!}{(k_1+k_2+1)!}S(k_1+k_2+2,r)=
\sum_{i=1}^{r-1}(i-1)!(r-i-1)!S(k_1+1,i)S(k_2+1,r-i).$$
\end{lem}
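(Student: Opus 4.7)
\medskip

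\noindent\textbf{Proof proposal.} The plan is to reduce the identity to a one-variable polynomial-vanishing identity and then finish via the standard finite-difference identity. First, I apply the explicit formula $S(n,k)=\frac{1}{k!}\sum_{\ell=0}^{k}(-1)^{k-\ell}\binom{k}{\ell}\ell^{n}$ from equation (\ref{1.1}) to both sides. On the LHS, the identity $\binom{r}{\ell}/r=\binom{r-1}{\ell-1}/\ell$ absorbs the $(r-1)!/r!$ factor, giving
$$\text{LHS}=\frac{k_{1}!\,k_{2}!}{(k_{1}+k_{2}+1)!}\sum_{s}(-1)^{r-s}\binom{r-1}{s-1}s^{k_{1}+k_{2}+1}.$$
On the RHS, applying the explicit formula to both Stirling factors produces a triple sum over $(i,p,q)$; the identities $\binom{i}{p}/i=\binom{i-1}{p-1}/p$ and $\binom{r-i}{q}/(r-i)=\binom{r-i-1}{q-1}/q$ turn the $1/(i(r-i))$ coming from $(i-1)!(r-i-1)!/(i!(r-i)!)$ into binomial pieces, and the inner sum over $i$ collapses by Chu--Vandermonde $\sum_{i}\binom{i-1}{p-1}\binom{r-i-1}{q-1}=\binom{r-1}{p+q-1}$. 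Setting $s=p+q$ yields
$$\text{RHS}=\sum_{s}(-1)^{r-s}\binom{r-1}{s-1}\sum_{p=1}^{s-1}p^{k_{1}}(s-p)^{k_{2}}.$$

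The identity is therefore equivalent to $\sum_{s}(-1)^{r-s}\binom{r-1}{s-1}Q(s)=0$, where
$$Q(s):=\frac{k_{1}!\,k_{2}!}{(k_{1}+k_{2}+1)!}s^{k_{1}+k_{2}+1}-\sum_{p=1}^{s-1}p^{k_{1}}(s-p)^{k_{2}}$$
is a polynomial in $s$ (by Faulhaber's formula). The standard finite-difference identity states that $\sum_{t=0}^{r-1}(-1)^{r-1-t}\binom{r-1}{t}P(t+1)=0$ whenever $\deg P<r-1$ (the $(r-1)$-st forward difference annihilates). Thus everything reduces to the

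\noindent\textbf{Key claim:} $\deg_{s}Q(s)\leq \max(k_{1},k_{2})$,

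\noindent since the hypothesis $r\geq\max(k_{1},k_{2})+2$ then gives $\deg Q\leq r-2<r-1$.

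The key claim is the main obstacle, and I would prove it by ordinary generating function analysis. Writing $L_{k}(x):=\sum_{p\geq 1}p^{k}x^{p}$, one has
$$\sum_{s\geq 0}Q(s)\,x^{s}=\frac{k_{1}!\,k_{2}!}{(k_{1}+k_{2}+1)!}\,L_{k_{1}+k_{2}+1}(x)-L_{k_{1}}(x)L_{k_{2}}(x),$$
and the pole order at $x=1$ of the OGF of a polynomial of degree $d$ is exactly $d+1$. Substituting $x=e^{-w}$ and using the classical expansion $L_{k}(e^{-w})=k!/w^{k+1}+H_{k}(w)$ with $H_{k}$ analytic at $w=0$ (read off from the Bernoulli generating function $1/(1-e^{-u})$), the leading $k_{1}!\,k_{2}!/w^{k_{1}+k_{2}+2}$ terms cancel by construction. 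The surviving poles come only from the cross terms $(k_{1}!/w^{k_{1}+1})H_{k_{2}}(w)$ and $H_{k_{1}}(w)(k_{2}!/w^{k_{2}+1})$ and have order at most $\max(k_{1},k_{2})+1$. Hence $\deg Q\leq \max(k_{1},k_{2})\leq r-2$, and the finite-difference identity closes the argument.
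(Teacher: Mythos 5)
The paper does not prove this lemma at all --- it is quoted from Agoh and Dilcher \cite{[AD]} --- so there is no internal proof to compare against; your argument stands as an independent derivation, and after checking each step I believe it is correct.

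Your route is: the explicit formula for all three Stirling numbers; absorption of the $1/r$, $1/i$, $1/(r-i)$ factors into shifted binomials (legitimate because the $\ell=0$, $p=0$, $q=0$ terms vanish); collapse of the $i$-sum by $\sum_i\binom{i-1}{p-1}\binom{r-i-1}{q-1}=\binom{r-1}{p+q-1}$, which is the parallel-summation form of Vandermonde (summation over the upper index) --- the correct identity here, even if not literally Chu--Vandermonde; and finally the $(r-1)$-st finite-difference annihilation, with all the content concentrated in the bound $\deg_sQ\le\max(k_1,k_2)$. That bound is genuinely true (for instance $k_1=k_2=2$ gives $Q(s)=s/30$), and your singularity analysis proves it: $Q$ is an honest polynomial by Faulhaber, the pole order at $x=1$ of the ordinary generating function of a degree-$d$ polynomial is exactly $d+1$, the substitution $x=e^{-w}$ preserves pole orders since $1-e^{-w}\sim w$, and after the designed cancellation of the $k_1!k_2!/w^{k_1+k_2+2}$ terms only the cross terms of order at most $\max(k_1,k_2)+1$ survive. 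The hypothesis $r\ge\max(k_1,k_2)+2$ then enters exactly where it should, to force $\deg Q<r-1$. (The only point deserving a sentence in a final write-up is that the polynomial extension of the convolution sum agrees with the combinatorial value for all $s\ge 1$, which is what the difference operator actually sees; a possible mismatch at $s=0$ only shifts the generating function by a constant and is harmless.) By contrast, Agoh and Dilcher obtain the identity by generating-function manipulations; your proof is elementary and self-contained, and it isolates the pleasant fact that $\sum_{p=1}^{s-1}p^{k_1}(s-p)^{k_2}$ agrees with $\frac{k_1!\,k_2!}{(k_1+k_2+1)!}s^{k_1+k_2+1}$ up to an error of degree $\max(k_1,k_2)$ --- the discrete Beta integral that explains the constant in the lemma.
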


\begin{lem} \cite{[J]}\label{lem 10}
Let $m,n,v\in\mathbb{N}$, $v\geq 1$ and $p$ be a prime number. Then
\begin{align}\label{1}
B_{m+np^v}(x)\equiv\sum_{j=0}^n{n\choose
j}(x^p+x^{p^2}+\cdots+x^{p^v})^{n-j}B_{m+j}(x)\mod{\frac{np}{2}\mathbb{Z}_p[x]},
\end{align}
where the Bell polynomials are defined by
\begin{align}\label{2}
B_n(x)=\sum_{k=0}^nS(n,k)x^k,n\geq 0.
\end{align}
\end{lem}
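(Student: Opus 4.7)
My plan is to establish the lemma via the exponential generating function
$$F(x,t):=\sum_{n\ge 0}B_n(x)\,\frac{t^n}{n!}=\exp\bigl(x(e^t-1)\bigr).$$
Faà di Bruno's formula (or induction on $k$ using $\frac{d}{dt}F(x,t)=xe^t\,F(x,t)$) yields
$$\frac{d^k}{dt^k}F(x,t)=F(x,t)\cdot B_k(xe^t),$$
which translates to the shifted-index identity $\sum_{m\ge 0}B_{m+k}(x)\,t^m/m!=F(x,t)\,B_k(xe^t)$. Since $F(x,t)$ has constant term $1$ in $t$ and is hence a unit in $\mathbb{Z}_p[x][[t]]$, Lemma~\ref{lem 10} is equivalent to the formal power series congruence
$$B_{np^v}(xe^t)\equiv\sum_{j=0}^n\binom{n}{j}\bigl(x^p+x^{p^2}+\cdots+x^{p^v}\bigr)^{n-j}B_j(xe^t)\pmod{\tfrac{np}{2}\mathbb{Z}_p[x][[t]]}.$$

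For the base case $n=1$, formula~\eqref{1.1} combined with Fermat's little theorem (so that $(k-i)^p\equiv k-i\pmod p$) forces $k!S(p,k)\equiv 0\pmod p$ for $k\ge 2$, and hence $S(p,k)\equiv 0\pmod p$ for $2\le k\le p-1$. This yields $B_p(y)\equiv y+y^p\pmod p$ in $\mathbb{Z}_p[y]$, and since $e^{pt}\equiv 1\pmod{p\mathbb{Z}_p[[t]]}$, it follows that $B_p(xe^t)\equiv xe^t+x^p\pmod p$. A $v$-fold iteration using Freshman's dream $(A+B)^p\equiv A^p+B^p\pmod p$ then extends this to $B_{p^v}(xe^t)\equiv xe^t+(x^p+x^{p^2}+\cdots+x^{p^v})\pmod p$, which is the required $n=1$ statement (modulo $p$ is the same ideal as modulo $p/2$ for odd $p$, and the claim is vacuous for $p=2$).

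For general $n$, I would induct on $n$. Writing the outcome of the base case as $B_{p^v}(xe^t)=xe^t+(x^p+\cdots+x^{p^v})+p\,\varepsilon(x,t)$ for a specific $\varepsilon\in\mathbb{Z}_p[x][[t]]$, I would substitute into the EGF identity for $B_{(n+1)p^v}$, collect binomial coefficients via Pascal's rule $\binom{n}{j-1}+\binom{n}{j}=\binom{n+1}{j}$, and verify that every cross-term arising from the $p\varepsilon$-part carries a factor of $\binom{n}{k}p^k$ with $k\ge 1$. The sharp modulus $\tfrac{np}{2}$ comes from the following arithmetic estimate, which uses $\binom{n}{k}=\tfrac{n}{k}\binom{n-1}{k-1}$ together with the elementary bound $k>\nu_p(k)$ for $k\ge 1$:
$$\nu_p\!\left(\binom{n}{k}p^k\right)\;\ge\;\nu_p(n)-\nu_p(k)+k\;\ge\;\nu_p(n)+1\qquad(k\ge 1).$$
For odd $p$ this matches $\nu_p(np/2)=\nu_p(n)+1$ exactly; for $p=2$ it exceeds $\nu_2(n)=\nu_2(np/2)$. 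In either case every cross-term $\binom{n}{k}p^k\cdot(\text{correction})$ with $k\ge 1$ is absorbed into $\tfrac{np}{2}\mathbb{Z}_p$.

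The main obstacle is the inductive step: a naive iteration of the $n=1$ congruence $n$ times yields only modulus $p$, losing the crucial factor of $n$. The resolution is to perform all manipulations at the level of formal power series in $\mathbb{Z}_p[x][[t]]$—where the binomial expansion is a genuine ring operation and no operator identities on the sequence $\{B_m(x)\}_m$ are required—so that the accumulated $p$-adic error from $n$ applications of the base case can be expanded as a single sum indexed by $k$, whereupon the above valuation bound collapses every term with $k\ge 1$ into $\tfrac{np}{2}\mathbb{Z}_p[x][[t]]$ and leaves precisely the desired leading sum $\sum_{j=0}^n\binom{n}{j}(x^p+\cdots+x^{p^v})^{n-j}B_j(xe^t)$.
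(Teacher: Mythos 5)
The paper offers no proof of this lemma: it is quoted verbatim from Junod's article \cite{[J]}, so there is no internal argument to compare yours against. Judged on its own terms, your proposal has the right skeleton --- the identity $\sum_{m\ge0}B_{m+k}(x)t^m/m!=F(x,t)\,B_k(xe^t)$, Touchard's congruence $B_p(y)\equiv y+y^p\pmod p$ as the base case, and the estimate $\nu_p\bigl(\binom{n}{k}p^k\bigr)\ge\nu_p(n)-\nu_p(k)+k\ge\nu_p(n)+1$ as the source of the sharp modulus $\tfrac{np}{2}$ --- but the inductive step, which you yourself identify as the main obstacle, is never actually carried out, and the mechanism you propose for it does not work.

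The concrete gap is this: the assignment $k\mapsto\Phi_k:=B_k(xe^t)$ is not multiplicative (already $B_2(y)=y+y^2\neq B_1(y)^2$), so knowing the single series $\Phi_{p^v}=xe^t+c+p\varepsilon$ with $c=x^p+\cdots+x^{p^v}$ gives no ring-theoretic route from $\Phi_{np^v}$ to $\Phi_{(n+1)p^v}$; there is nothing to ``substitute into the EGF identity'' and no product on which Pascal's rule could act. What is actually required is the operator formulation $\Phi_k=T^k\cdot 1$ with $T=xe^t+\frac{d}{dt}$, the congruence $T^{p^v}\equiv T+c\pmod p$ at the level of operators, and --- crucially --- the fact that the error operator $E:=(T^{p^v}-T-c)/p$ commutes with $T+c$, so that $(T+c+pE)^n$ genuinely expands by the binomial theorem into $\sum_{k}\binom{n}{k}(pE)^k(T+c)^{n-k}$; only then does your valuation bound apply. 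Without that commutativity the cross-terms with a single factor $pE$ read $\sum_{i=0}^{n-1}(T+c)^i(pE)(T+c)^{n-1-i}$ and cannot be collected into $n(T+c)^{n-1}pE$, so one recovers only the trivial modulus $p$ --- exactly the loss you set out to avoid. Your stated resolution, that working in $\mathbb{Z}_p[x][[t]]$ makes ``binomial expansion a genuine ring operation'' with ``no operator identities required,'' is therefore precisely where the argument fails. Two secondary points: $F(x,t)$ and $B_j(xe^t)$ do not lie in $\mathbb{Z}_p[x][[t]]$ (the coefficient $B_2(x)/2!$ is not $2$-integral), so the computation must be set in the Hurwitz ring $H=\{\sum_m a_mt^m/m!:a_m\in\mathbb{Z}_p[x]\}$, where $F$ is a unit and the ideal $\tfrac{np}{2}H$ does encode coefficientwise divisibility; and the ``$v$-fold iteration by Freshman's dream'' in your base case also silently uses commutativity modulo $p$, via $[\tfrac{d}{dt},A^p]=pA^p$ for $A=xe^t$.
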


Let $n=\sum_{\lambda=0}^{\infty}\varepsilon_\lambda(n)2^\lambda$ with
$\varepsilon_\lambda(n)\in\{0,1\}$. Then $s_2(n)=\sum_{\lambda=0}
^{\infty}\varepsilon_\lambda(n)$. Further, one has the following result.

\begin{lem} \label{lem 4}
Let $m$ and $n\in\mathbb{N}$. Then $s_2(m+n)=s_2(m)+s_2(n)$ if and only if
$\varepsilon_\lambda(m)+\varepsilon_\lambda(n)=\varepsilon_\lambda(m+n)$
for all $\lambda\in\mathbb{N}$.
\end{lem}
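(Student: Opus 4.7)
The plan is to prove this classical digit-sum identity by analyzing the standard binary-addition algorithm: $s_2(m+n)=s_2(m)+s_2(n)$ holds exactly when the addition of $m$ and $n$ in base $2$ produces no carries. The direction $(\Leftarrow)$ is immediate, for if $\eps_\lambda(m)+\eps_\lambda(n)=\eps_\lambda(m+n)$ for every $\lambda$, then summing both sides over $\lambda\in\mathbb{N}$ gives $s_2(m)+s_2(n)=s_2(m+n)$.

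For the harder direction $(\Rightarrow)$, I would make the schoolbook addition explicit. Set $f_\lambda:=\eps_\lambda(m)+\eps_\lambda(n)\in\{0,1,2\}$ and define a carry sequence by $c_0=0$ together with the recursion $f_\lambda+c_\lambda=\eps_\lambda(m+n)+2c_{\lambda+1}$, which is the base-$2$ addition rule (the digits $\eps_\lambda(m+n)\in\{0,1\}$ and carries $c_\lambda\in\{0,1\}$ are uniquely determined by it). Summing this equality over $\lambda\ge 0$ and telescoping the carry terms yields
\[
s_2(m)+s_2(n)=s_2(m+n)+\sum_{\lambda\ge 0}c_{\lambda+1}.
\]
Under the hypothesis $s_2(m+n)=s_2(m)+s_2(n)$, every carry must therefore vanish, and the local rule collapses to $f_\lambda=\eps_\lambda(m+n)\le 1$, which is precisely the digit-by-digit identity required.

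A slicker alternative is to invoke Kummer's theorem (Lemma~\ref{lem -2}) in the form $\nu_2\binom{m+n}{m}=s_2(m)+s_2(n)-s_2(m+n)\ge 0$: the equality case is $\nu_2\binom{m+n}{m}=0$, and this holds iff no bit position has $\eps_\lambda(m)=\eps_\lambda(n)=1$, which is again equivalent to the digit-by-digit equality via $m+n=\sum_\lambda(\eps_\lambda(m)+\eps_\lambda(n))2^\lambda$. There is no serious obstacle here; the lemma is a routine preparatory fact about base-$2$ arithmetic, and the only (minor) care needed is the bookkeeping in the telescoping sum (or, in the alternative route, observing that equality in Kummer forces no carries).
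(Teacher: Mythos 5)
Your proof is correct. The paper itself gives no argument at all for this lemma --- it just says it ``follows immediately from the proof of Lemma 1 in \cite{[W]}'' --- and what it is deferring to is precisely the carry-counting analysis you make explicit (the trivial direction by summing digits, the converse by introducing the carry sequence and telescoping), so in substance your route and the paper's coincide, with yours having the merit of being self-contained. One caution about your ``slicker alternative'': the paper's Lemma \ref{lem -2} only yields that $s_2(m)+s_2(n)=s_2(m+n)$ is equivalent to $\nu_2\binom{m+n}{m}=0$; the further claim that this vanishing is equivalent to there being no position $\lambda$ with $\varepsilon_\lambda(m)=\varepsilon_\lambda(n)=1$ (and hence to the digit-by-digit identity) is Kummer's theorem in its carry form, which is not what Lemma \ref{lem -2} states and whose verification is again exactly the carry bookkeeping of your first argument --- so the alternative does not actually bypass that step, and the first argument is the one to keep.
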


\begin{proof}
This lemma follows immediately from the proof of Lemma 1 in \cite{[W]}. \hfill$\Box$
\end{proof}

\begin{lem}\label{lem 5}
Let $n, a\in \mathbb{N}$ and $1\leq a<2^{n+1}$. Define the set $J$ of positive integers
by $J:=\{1\le j\le 2^n\mid s_2(2^{n+1}+a-j)+s_2(j)=s_2(2^{n+1}+a)\}$.
Then $|J|=2^{s_2(a)}-1$ if $1\leq a\leq 2^n$, and $|J|=2^{s_2(a)-1}$ if $2^n<a<2^{n+1}$.
\end{lem}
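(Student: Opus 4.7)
The plan is to reinterpret the defining equation of $J$ as a ``submask'' condition on binary representations, and then count directly. Applying Lemma~\ref{lem 4} to the pair $j$ and $2^{n+1}+a-j$ (whose sum is $m:=2^{n+1}+a$), the identity $s_2(m-j)+s_2(j)=s_2(m)$ is equivalent to $\varepsilon_\lambda(j)+\varepsilon_\lambda(m-j)=\varepsilon_\lambda(m)$ for every $\lambda\in\mathbb{N}$. Since every binary digit is $0$ or $1$, this forces $\varepsilon_\lambda(j)\le\varepsilon_\lambda(m)$ for all $\lambda$; conversely, if this inequality holds at every position, then the binary subtraction $m-j$ proceeds without borrows and the digit equation holds. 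Hence $J$ consists precisely of the integers $j$ with $1\le j\le 2^n$ whose binary $1$-bits form a subset of the $1$-bits of $m$.

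Next I would describe the bit pattern of $m=2^{n+1}+a$. Since $1\le a<2^{n+1}$, the bit at position $n+1$ of $m$ equals $1$, while the bits at positions $0,\ldots,n$ coincide with those of $a$, and all higher bits vanish. Because $j\le 2^n<2^{n+1}$, the bit of $j$ at position $n+1$ is necessarily $0$, so the constraint ``$j$ is a submask of $m$'' reduces to ``$j$ is a submask of $a$''. It therefore suffices to count integers $j$ with $1\le j\le 2^n$ that are submasks of $a$.

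Now split into two cases according to the statement of the lemma. In Case~1 ($1\le a\le 2^n$), every submask of $a$ is at most $a\le 2^n$; the total number of submasks of $a$ is $2^{s_2(a)}$, and discarding $j=0$ yields $|J|=2^{s_2(a)}-1$. In Case~2 ($2^n<a<2^{n+1}$), the integer $a$ has a $1$-bit at position $n$, so I would write $a=2^n+a'$ with $0\le a'<2^n$ and $s_2(a')=s_2(a)-1$. Each submask $j$ of $a$ admits a unique decomposition $j=\varepsilon\cdot 2^n+j'$ with $\varepsilon\in\{0,1\}$ and $j'$ a submask of $a'$. The constraint $1\le j\le 2^n$ then bifurcates: either $\varepsilon=0$ and $j'\neq 0$, contributing $2^{s_2(a)-1}-1$ values, or $\varepsilon=1$ which forces $j'=0$ and gives the single value $j=2^n$. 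Summing yields $|J|=2^{s_2(a)-1}$.

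Neither step poses a deep difficulty: the main thing to watch is the boundary case $a=2^n$ (which sits in Case~1 and is handled uniformly by the submask count $2^{s_2(a)}-1=1$) and the subcase $a'=0$ in Case~2 (which cannot occur there since $a>2^n$); the submask decomposition in Case~2 absorbs these edge cases cleanly. The main obstacle is therefore only careful bookkeeping — the conceptual content is entirely packaged in the reformulation via Lemma~\ref{lem 4}.
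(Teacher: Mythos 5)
Your proposal is correct and follows essentially the same route as the paper: both reduce the condition defining $J$ via Lemma~\ref{lem 4} to the statement that the binary digits of $j$ are dominated by those of $2^{n+1}+a$ (equivalently of $a$, since $j\le 2^n$), and then count nonzero ``submasks,'' treating the cases $1\le a\le 2^n$ and $2^n<a<2^{n+1}$ separately. Your explicit decomposition $j=\varepsilon\cdot 2^n+j'$ in the second case is just a slightly more systematic bookkeeping of the paper's step of handling $j=2^n$ separately and counting submasks of $a-2^n$.
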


\begin{proof} For any positive integer $d$, define $M_d:=\{\lambda\in \mathbb{N}
\mid \varepsilon_\lambda(d)=1\}$.
Then $d=\sum_{\lambda\in M_d}2^\lambda$ and $s_2(d)=|M_d|$. By Lemma
\ref{lem 4} one knows that $s_2(2^{n+1}+a-j)+s_2(j)=s_2(2^{n+1}+a)$ if and only if
\begin{align}
\varepsilon_\lambda(j)+\varepsilon_\lambda(2^{n+1}+a-j)
=\varepsilon_\lambda(2^{n+1}+a)
 \label{eq16}
\end{align}
for all $\lambda\in \mathbb{N}$. Therefore for any given $\lambda\in \mathbb{N}$,
$\varepsilon_\lambda(j)=0$ or 1 if $\varepsilon_\lambda(2^{n+1}+a)=1$, and
$\varepsilon_\lambda(j)=0$ if $\varepsilon_\lambda(2^{n+1}+a)=0$.
It then follows that for any given integer
 $1\le a \le 2^n$,  $j\in J$ if and only if $M_j\subseteq M_{a}$ and  $M_j\neq\varnothing$.
So $|J|=2^{|M_a|}-1=2^{s_2(a)}-1$ if $1\leq a\leq 2^n$.

Now let $2^n< a<2^{n+1}$.  So if $j=2^n$, then one can check that
$s_2(2^{n+1}+a-2^n)+s_2(2^n)=s_2(2^{n+1}+a)$. This implies that  $2^n\in J$.
On the other hand, since $1<a-2^n<2^n$, one gets that $j\in J\setminus\{2^n\}$
if and only if $M_j\subseteq M_{a-2^n}$ and $M_j\neq\varnothing$.
Hence $|J|=2^{|M_{a-2^n}|}=2^{s_2(a)-1}$ if $2^n< a<2^{n+1}$.
The proof of Lemma \ref{lem 5} is complete.   \hfill$\Box$
\end{proof}

\begin{lem}\label{lem 7}
Let $n, a, c\in \mathbb{N}$ with $c\geq1$ being odd and $1\leq a\leq 2^{n}$. Then
\begin{align}\label{add11}
s_2(c2^n-a)=s_2(c)+n-\nu_2(a)-s_2(a).
\end{align}
\end{lem}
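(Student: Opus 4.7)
The plan is to decompose $c 2^n - a = (c-1) 2^n + (2^n - a)$ and observe that, for $1 \leq a < 2^n$, the two summands have disjoint binary supports, so that their base-$2$ digit sums add. Indeed, $0 < 2^n - a < 2^n$ has its support on positions $0, \ldots, n-1$; and either $c = 1$ (so the first summand vanishes) or $c \geq 3$ is odd, in which case $c - 1$ is a positive even integer, making $(c-1) 2^n$ a multiple of $2^{n+1}$ with support on positions $\geq n + 1$. Either way,
\[
s_2(c 2^n - a) = s_2(c - 1) + s_2(2^n - a).
\]

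From here I would apply two auxiliary identities. First, since $c$ is odd its least significant bit is $1$, so subtracting one only flips that bit and $s_2(c - 1) = s_2(c) - 1$. Second, I claim the identity
\[
s_2(2^n - a) = n + 1 - \nu_2(a) - s_2(a) \quad \text{for } 1 \leq a \leq 2^n,
\]
which is the heart of the lemma. I would derive this by applying Legendre's formula (Lemma \ref{lem -1}) to the binomial coefficient ${2^n \choose a} = \frac{(2^n)!}{a!\,(2^n - a)!}$, yielding $\nu_2\bigl({2^n \choose a}\bigr) = s_2(a) + s_2(2^n - a) - 1$, and then combining with the well-known evaluation $\nu_2\bigl({2^n \choose a}\bigr) = n - \nu_2(a)$ (itself immediate from Legendre's formula, or from Kummer's Lemma \ref{lem -2}). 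Alternatively, the identity admits a direct proof by performing the binary subtraction $2^n - a$: if $\nu = \nu_2(a)$, the subtraction produces a $1$ at position $\nu$, flips the bits of $a$ at positions $\nu + 1, \ldots, n - 1$, and clears positions $0, \ldots, \nu - 1$ as well as position $n$.

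Assembling these pieces gives
\[
s_2(c 2^n - a) = (s_2(c) - 1) + (n + 1 - \nu_2(a) - s_2(a)) = s_2(c) + n - \nu_2(a) - s_2(a),
\]
which is precisely \eqref{add11}. The boundary case $a = 2^n$ can either be handled directly (since $c 2^n - 2^n = (c - 1) 2^n$ has digit sum $s_2(c) - 1$) or absorbed into the same formula by using $\nu_2(2^n) = n$ and $s_2(2^n) = 1$. I do not anticipate any serious obstacle; the only mildly delicate point is the auxiliary digit-sum identity $s_2(2^n - a) = n + 1 - \nu_2(a) - s_2(a)$, and this is a standard consequence of Legendre's formula.
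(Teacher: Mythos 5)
Your proposal is correct and follows essentially the same route as the paper: the paper also splits $c2^n-a=(c-1)2^n+(2^n-a)$, uses $s_2(c-1)=s_2(c)-1$, and evaluates $s_2(2^n-a)$ by exactly the explicit binary subtraction you describe as your ``alternative'' derivation (writing $2^n-a=2^{\nu_2(a)}+\sum_{\lambda=\nu_2(a)}^{n-1}(1-\varepsilon_\lambda(a))2^\lambda$), with the boundary case $a=2^n$ checked separately just as you suggest. Your primary derivation of the key identity via $\nu_2\bigl({2^n\choose a}\bigr)=n-\nu_2(a)$ together with Legendre/Kummer is only a cosmetic variant of the same digit-sum computation.
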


\begin{proof} If $a=2^n$, then it is easy to check that (\ref{add11})
is true. Now let $1\leq a<2^n$. One can write
$a=\sum_{\lambda=\nu_2(a)}^{n-1} \varepsilon_\lambda(a)2^\lambda$.
Clearly $s_2(a)=\sum_{\lambda=\nu_2(a)}^{n-1}
\varepsilon_\lambda(a)$ and $\varepsilon_{\nu_2(a)}(a)=1$. Then
\begin{align}\label{add12}
c2^n-a&=(c-1)2^n+2^n-a\nonumber\\
&=(c-1)2^n+\Big(2^{\nu_2(a)}+\sum_{\lambda=\nu_2(a)}^{n-1}2^\lambda\Big)-
\sum_{\lambda=\nu_2(a)}^{n-1}\varepsilon_\lambda(a)2^\lambda\nonumber\\
&=(c-1)2^n+\sum_{\lambda=\nu_2(a)}^{n-1}(1-\varepsilon_\lambda(a))2^\lambda+2^{\nu_2(a)}.
\end{align}

Since $s_2(c-1)=s_2(c)-1$, by (\ref{add12}) one has
\begin{align*}
 s_2(c2^n-a)&=s_2(c-1)+\sum_{\lambda=\nu_2(a)}^{n-1}(1-\varepsilon_\lambda(a))+1\\
 &=s_2(c)+\sum_{\lambda=\nu_2(a)}^{n-1}(1-\varepsilon_\lambda(a))\\
 &=s_2(c)+n-\nu_2(a)-s_2(a)
\end{align*}
as required. This completes the proof of Lemma \ref{lem 7}.
\hfill$\Box$\\
\end{proof}

\begin{lem}\label{lem 9} \cite{[HZZ]}
Let $N\geq 2$ be an integer and $r$, $t$ be odd numbers. For any
$m\in \mathbb{Z}^+,$  one has $\nu_2((r2^N-1)^{t2^m}-1)=m+N$.
\end{lem}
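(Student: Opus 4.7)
The plan is to reduce the problem to the case $t=1$ by exploiting the oddness of $t$, and then to compute $\nu_2((r2^N-1)^{2^m}-1)$ directly by iterated squaring.

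First I would set $x=r2^N-1$, which is odd since $N\ge 2$, and use the identity
\[
x^{t2^m}-1=(x^{2^m}-1)\sum_{i=0}^{t-1}x^{i2^m}.
\]
The sum on the right is a sum of $t$ odd integers (since $x$ is odd), and is therefore odd because $t$ is odd. Consequently $\nu_2(x^{t2^m}-1)=\nu_2(x^{2^m}-1)$, reducing the problem to showing $\nu_2(x^{2^m}-1)=m+N$.

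Next I would apply the telescoping factorization
\[
x^{2^m}-1=(x-1)(x+1)\prod_{j=1}^{m-1}(x^{2^j}+1)
\]
and evaluate the 2-adic valuation of each factor separately. One has $\nu_2(x-1)=\nu_2(r2^N-2)=1$ (the hypothesis $N\ge 2$ is essential here, since it forces $r2^{N-1}-1$ to be odd), and $\nu_2(x+1)=\nu_2(r2^N)=N$. For each $j\ge 1$ the integer $x^{2^j}=(x^{2^{j-1}})^2$ is the square of an odd number, so $x^{2^j}\equiv 1\pmod 8$ and hence $\nu_2(x^{2^j}+1)=1$. Summing these contributions gives
\[
\nu_2(x^{2^m}-1)=1+N+(m-1)\cdot 1=m+N,
\]
as required (for $m=1$ the product is empty and the formula still holds).

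The only subtle point is the contrast between the single factor $x+1$, which contributes the large value $N$, and every higher factor $x^{2^j}+1$ with $j\ge 1$, each contributing only $1$; this is exactly the 2-adic lifting-the-exponent phenomenon in its simplest guise. No serious obstacle is anticipated, as every step reduces to an elementary parity computation modulo $8$.
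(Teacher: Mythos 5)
Your proof is correct: the reduction from exponent $t2^m$ to $2^m$ via the odd cofactor $\sum_{i=0}^{t-1}x^{i2^m}$, and the iterated difference-of-squares factorization with $\nu_2(x-1)=1$ (using $N\ge 2$), $\nu_2(x+1)=N$, and $\nu_2(x^{2^j}+1)=1$ for $j\ge 1$, together give exactly $m+N$, with the empty-product case $m=1$ handled. The paper itself gives no proof of this lemma, quoting it from \cite{[HZZ]}; your argument is the standard lifting-the-exponent computation and is a complete, self-contained justification.
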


\section{Proofs of Theorems 1.1 and 1.2}

In this section, one uses induction and Lemmas 2.1 to 2.4 and 2.6 as
well as 2.7 to show Theorems 1.1 and 1.2. One begins with the proof
of Theorem 1.1.\\

\noindent{\it Proof of Theorem 1.1.} If $b=0$, then Theorem \ref{thm 1}
is true by Lemmas \ref{lem 1} and \ref{lem 2}. In what follows one lets
$b\geq 1$. There exists an unique integer $e\geq 0$ such that
$2^e\leq b<2^{e+1}$. One shows Theorem \ref{thm 1} using induction on $e$.
First one treats the case $e=0$, i.e., $b=1$. Using Lemma
\ref{lem 0} with $n$, $m$ and $k$ replaced by $(c-1)2^n$,
$2^{n}$ and $2^{n+1}+a$, respectively, one has
\begin{align}\label{(3.1)}
  S(c2^n,2^{n+1}+a) &=\sum_{j=1}^{2^{n+1}+a}\sum_{i=0}^j
  f(i,j)=\sum_{j=1}^{2^{n}}\sum_{i=0}^{2^n}f(i,j),
\end{align}
where
$$
f(i,j):={j\choose i}\frac{(2^{n+1}+a-i)!}{(2^{n+1}+a-j)!}
S((c-1)2^n,2^{n+1}+a-i)S(2^{n},j).
$$

Since $c$ is an odd integer,  $\nu_2((c-1)2^n)\geq n+1$. It then
follows from Lemmas \ref{lem -1}, \ref{lem 1} and \ref{lem 2} that
\begin{align}
\nu_2(f(i,j))&\geq
\nu_2\bigg(\frac{(2^{n+1}+a-i)!}{(2^{n+1}+a-j)!}\bigg)
+\nu_2(S((c-1)2^n,2^{n+1}+a-i)) +\nu_2(S(2^{n},j))\nonumber\\
&\geq \nu_2((2^{n+1}+a-i)!)-\nu_2((2^{n+1}+a-j)!)+s_2(2^{n+1}+a-i)-1+s_2(j)-1\nonumber\\
&= (j-i)+s_2(2^{n+1}+a-j)-s_2(2^{n+1}+a-i)+s_2(2^{n+1}+a-i)+s_2(j)-2\nonumber\\
&\geq s_2(2^{n+1}+a-j)+s_2(j)-2\label{eq 1}
\end{align}
since $j\geq i$. By Lemma \ref{lem -2} one knows that
$$s_2(j)+s_2(2^{n+1}+a-j)\ge s_2(2^{n+1}+a).$$
So by (\ref{eq 1}) and noting that $0<a<2^{n+1}$, one obtains
\begin{align}\label{(3.2)}
\nu_2(f(i,j))\geq s_2(2^{n+1}+a)-2=s_2(a)-1.
\end{align}
It then follows from (\ref{(3.1)}) and (\ref{(3.2)}) that
$$
\nu_2( S(c2^n,2^{n+1}+a))\geq \min_{0\leq i\leq j\leq
2^n}\{\nu_2(f(i,j))\} \geq s_2(a)-1.
$$
Hence Theorem \ref{thm 1} is true if $e=0$. In what follows, one lets $e\geq1$.

Assume that Theorem 1.1 is true for the case $t$ with $t\le e-1$.
Then $\nu_2(S(c2^n, b2^{n+1}+a))\geq s_2(a)-1$ for any integers $b$
with $0\leq b<2^e$. In the following one proves that Theorem \ref{thm
1} is true for the case $e$. This is equivalent to showing that
Theorem \ref{thm 1} is true for all integers $b\in[2^e,2^{e+1})$,
which will be done in what follows.

Let $b\in[2^e,2^{e+1})$ be any given integer. Since $c2^n\geq
b2^{n+1}+a$, there exist two positive integers $c_1$ and $c_2$ such
that $c=c_1+c_22^{\nu_2(b)+1}$ and $c_1<2^{\nu_2(b)+1}$. So by Lemma
\ref{lem 0}
\begin{align}\label{101}
S(c2^n,b2^{n+1}+a)&=\sum_{j=1}^{c_12^{n}}\sum_{i=0}^{j}g(i,j),
\end{align}
where
$$
g(i,j):={j\choose i}\frac{(b2^{n+1}+a-i)!}{(b2^{n+1}+a-j)!}
  S(c_22^{n+\nu_2(b)+1},b2^{n+1}+a-i)S(c_12^{n},j).
$$

\noindent{\bf Claim 1.} One has
\begin{align}
 \nu_2(S(c_22^{n+\nu_2(b)+1},b2^{n+1}+a-i)) \geq
s_2(b2^{n+1}+a-i)-s_2(b).\label{102}
\end{align}

Let's now prove Claim 1. If $\nu_2(c_2)+\nu_2(b)\geq e$,
then $b2^{n+1}+a-i<2^{e+n+2} \le
2^{\nu_2(b)+\nu_2(c_2)+n+2}$ since $a<2^{n+1}$ and $2^e\leq
b<2^{e+1}$. So by Lemmas \ref{lem 1} and \ref{lem 2}, one obtains that
\begin{align*}
  & \nu_2(S(c_22^{n+\nu_2(b)+1},b2^{n+1}+a-i))\nonumber\\
  &=\nu_2\bigg(S\Big(\frac{c_2}{2^{\nu_2(c_2)}}2^{n+\nu_2(b)+\nu_2(c_2)+1},
  b2^{n+1}+a-i\Big)\bigg)\nonumber\\
  &\geq s_2(b2^{n+1}+a-i)-1\nonumber\\
  &\geq s_2(b2^{n+1}+a-i)-s_2(b)
\end{align*}
as desired. So Claim 1 is proved in this case.

If $\nu_2(c_2)+\nu_2(b)\leq e-1$, then one can write
$b=b_12^{\nu_2(c_2)+\nu_2(b)+1}+b_2 $ for some integers
$0<b_1<2^{e-\nu_2(c_2)-\nu_2(b)}$ and $2^{\nu_2(b)}\leq
b_2<2^{\nu_2(c_2)+\nu_2(b)+1}$ since $2^e\leq b<2^{e+1}$. One can
deduce that $s_2(b2^{n+1}+a-i)=s_2(b_22^{n+1}+a-i)+s_2(b_1)$. It
then follows from the inductive hypothesis that
\begin{align*}
  &\nu_2(S(c_22^{n+\nu_2(b)+1},b2^{n+1}+a-i))\nonumber\\
  &=\nu_2\bigg(S\bigg(\frac{c_2}{2^{\nu_2(c_2)}}2^{n+\nu_2(b)+\nu_2(c_2)+1},
  b_12^{n+\nu_2(b)+\nu_2(c_2)+2}
  +b_22^{n+1}+a-i\bigg)\bigg)\nonumber\\
  &\geq s_2(b_22^{n+1}+a-i)-1\nonumber\\
  &=s_2(b2^{n+1}+a-i)-s_2(b_1)-1\nonumber\\
  &\geq s_2(b2^{n+1}+a-i)-s_2(b)
\end{align*}
as required. So Claim 1 is true for this case. This concludes
the proof of Claim 1.

\noindent {\bf Claim 2.} For all the integers $i$ and $j$ such that
$0\leq i\leq j\leq c_12^{n}$ with $c_1<2^{\nu_2(b)+1}$, one has
\begin{align}
\nu_2(g(i, j))\ge s_2(a)-1.\label{105}
\end{align}
Suppose that Claim 2 is true. Then from  (\ref{101}) and Claim 2,
one deduces that
$$
\nu_2( S(c2^n,b2^{n+1}+a))\geq \min_{0\leq i\leq j\leq
c_12^n}\{\nu_2(g(i,j))\} \geq s_2(a)-1.
$$
In other words, Theorem \ref{thm 1} holds if $b\in [2^e,2^{e+1})$.
To finish the proof of Theorem \ref{thm 1}, it remains to show that
Claim 2 is true which will be done in the following.

If $1\leq j<2^{n+1}$, then by Lemmas \ref{lem 1} and \ref{lem 2} one
has $\nu_2(S(c_12^n,j))\geq s_2(j)-1$. Thus using Lemmas \ref{lem
-1}-\ref{lem -2} and the Claim 1, one derives from
$a<2^{n+1}$ that
\begin{align*}
\nu_2(g(i,j))&\geq
\nu_2\bigg(\frac{(b2^{n+1}+a-i)!}{(b2^{n+1}+a-j)!}\bigg)+s_2(b2^{n+1}+a-i)-s_2(b)
+s_2(j)-1\nonumber\\
&\geq s_2(b2^{n+1}+a-j)-s_2(b2^{n+1}+a-i)+s_2(b2^{n+1}+a-i)-s_2(b)+s_2(j)-1\nonumber\\
&\geq s_2(b2^{n+1}+a-j)+s_2(j)-s_2(b)-1\nonumber\\
&\geq s_2(b2^{n+1}+a)-s_2(b)-1\nonumber\\
&=s_2(a)-1
\end{align*}
as required. Hence Claim 2 is true in this case.

If $2^{n+1}\leq j\leq c_12^n,$ then one may let $j=j_{1}2^{n+1}+j_2$
for some integers  $0\leq j_2<2^{n+1}$ and $j_{1}<2^{\nu_2(b)}$
since $c_1<2^{\nu_2(b)+1}$.  If $j_2=0$, i.e., $j=j_{1}2^{n+1}$,
then by (\ref{102}) and Lemmas \ref{lem -1}-\ref{lem -2},
noting that  $a<2^{n+1}$, one yields
\begin{align*}
\nu_2(g(i,j)) &\geq
\nu_2\Big(\frac{(b2^{n+1}+a-i)!}{(b2^{n+1}+a-j)!}\Big)+\nu_2(
S(c_22^{n+\nu_2(b)+1},b2^{n+1}+a-i)) \nonumber\\
&\geq s_2(b2^{n+1}+a-j)-s_2(b2^{n+1}+a-i)+s_2(b2^{n+1}+a-i)-s_2(b)\nonumber\\
&= s_2((b-j_{1})2^{n+1}+a)-s_2(b)\nonumber\\
&= s_2(b-j_{1})+s_2(a)-s_2(b)\nonumber\\
&\geq s_2(a)
\end{align*}
since $j_{1}<2^{\nu_2(b)}$ implying that $s_2(b-j_1)\ge s_2(b)$.
Hence Claim 2 is true if $j_2=0$. Now let $j_2\geq1$.
Since $j_{1}<2^{\nu_2(b)}\le 2^e$, by the inductive hypothesis one has
\begin{align}
\nu_2(S(c_12^n,j))=\nu_2(S(c_12^n,j_{1}2^{n+1}+j_2))\geq
s_2(j_2)-1.\label{107}
\end{align}
Thus by Lemmas  \ref{lem -1}-\ref{lem -2}, (\ref{102}) and (\ref{107}) one obtains
\begin{align*}
\nu_2(g(i,j))&\geq
\nu_2\Big(\frac{(b2^{n+1}+a-i)!}{(b2^{n+1}+a-j)!}\Big)+\nu_2(
S(c_22^{n+\nu_2(b)+1},b2^{n+1}+a-i))+S(c_12^n,j) \nonumber\\
&\geq s_2(b2^{n+1}+a-j)-s_2(b2^{n+1}+a-i)+s_2(b2^{n+1}+a-i)-s_2(b)+s_2(j_2)-1\nonumber\\
&= s_2(b2^{n+1}+a-j)+s_2(j_2)-s_2(b)-1\nonumber\\
&= s_2((b-j_{1})2^{n+1}+a-j_2)+s_2(j_2)-s_2(b)-1\nonumber\\
&\geq  s_2((b-j_{1})2^{n+1}+a)-s_2(b)-1\nonumber\\
&=s_2(b-j_{1})+s_2(a)-s_2(b)-1\nonumber\\
&\geq s_2(a)-1
\end{align*}
since $s_2(b-j_1)\ge s_2(b)$. Hence Claim 2 holds if $j_2\ge
1$. So Claim 2 is proved.

This completes the proof of Theorem \ref{thm 1}. \hfill$\Box$\\

Consequently, one turns attention to the proof of Theorem 1.2.\\

\noindent{\it Proof of Theorem 1.2.}
If $a=2^n$, then by definition of Stirling numbers
of the second kind, one has
$$S(c2^{n},(c-1)2^{n}+a)=S(c2^{n},c2^n)=1.$$
This implies that $\nu_2(S(c2^{n},c2^n))=s_2(2^n)-1.$ So Theorem
\ref{thm 2} is true in this case.

Now let $1\leq a<2^n$ and $b=\frac{c-1}{2}$. Then
$$S(c2^{n},(c-1)2^{n}+a)=S(b2^{n+1}+2^{n},b2^{n+1}+a).$$
To prove Theorem \ref{thm 2}, it is sufficient to show that
\begin{align}
 \nu_2(S(b2^{n+1}+2^{n},b2^{n+1}+a))=s_2(a)-1.  \label{h1}
\end{align}

For $t\in \mathbb{N}$, define
\begin{align}\label{add10}
  A_t:=\{b\in \mathbb{N}\mid s_2(b)=t\}.
\end{align}
Then $\mathbb{N}=\bigcup_{t=0}^\infty A_t$. The proof is proceeded with
induction on $t$. First one considers the case $t=0$. If $b\in A_0$,
then $b=0$. By Lemma \ref{lem 1} one has
$$
\nu_2(S(b2^{n+1}+2^{n},b2^{n+1}+a))=\nu_2(S(2^{n},a))=s_2(a)-1.
$$
So Theorem \ref{thm 2} holds if $t=0$.

In the following let $t\geq1$. Assume that Theorem \ref{thm 2} is
true for the case $r$ with $r\leq t-1$. Then (\ref{h1}) holds
for any positive integers $b\in A_0\cup A_1\cup\cdots\cup
A_{t-1}.$ One will prove that Theorem \ref{thm 2} is true
for the case $t$, which is equivalent to showing (\ref{h1})
for all the integers $b\in A_t$.

Let $b\in A_t$ be a given integer. One first notices that
$$b2^{n+1}+a\geq\max(b2^{n+1}-1,2^{n}-1)+2.$$
Letting $k_1=b2^{n+1}-1, k_2=2^{n}-1$ and $r=b2^{n+1}+a$ in
Lemma \ref{lem 3} gives us that
\begin{align*}
&\frac{(b2^{n+1}-1)!(2^{n}-1)!}
{(b2^{n+1}+2^{n}-1)!}(b2^{n+1}+a-1)!S(b2^{n+1}+2^{n},b2^{n+1}+a)\\
=&\sum_{i=1}^{b2^{n+1}+a-1}(i-1)!(b2^{n+1}+a-i-1)!S(2^{n},i)
S(b2^{n+1},b2^{n+1}+a-i)\\
=&\sum_{i=a}^{2^{n}}\frac{1}{i(b2^{n+1}+a-i)}i!S(2^{n},i)
(b2^{n+1}+a-i)!S(b2^{n+1},b2^{n+1}+a-i).
\end{align*}
It follows that
\begin{align}\label{eq5}
(b2^{n+1}+a)!S(b2^{n+1}+2^{n},b2^{n+1}+a)
=\frac{(b2^{n+1}+2^{n}-1)!}{(b2^{n+1}-1)!(2^{n}-1)!}
\sum_{i=a}^{2^{n}}l(i),
\end{align}
where
$$l(i):=\frac{b2^{n+1}+a}{i(b2^{n+1}+a-i)}i!S(2^{n},i)
(b2^{n+1}+a-i)!S(b2^{n+1},b2^{n+1}+a-i).
$$

Write $b=(2b_0+1)2^{\nu_2(b)}$ for some $b_0\in\mathbb{N}$. Clearly
$s_2(b_0)=s_2(b)-1=t-1$ since $b\in A_t$. Then $b_0\in A_{t-1}$. It
then follows from Lemma \ref{lem -1} that
\begin{align}
   \nu_2\bigg(\frac{(b2^{n+1}+2^{n}-1)!}{(b2^{n+1}-1)!(2^{n}-1)!}\bigg)
   =&\nu_2((b2^{n+1}+2^{n}-1)!)-\nu_2((b2^{n+1}-1)!)-\nu_2((2^{n}-1)!)\nonumber\\
   =& 1-s_2(b2^{n+1}+2^{n}-1)+s_2(b2^{n+1}-1)+s_2(2^{n}-1)\nonumber\\
   =& 1-s_2(b2^{n+1})+s_2(b_02^{n+\nu_2(b)+2}+2^{n+\nu_2(b)+1}-1)\nonumber\\
   =& 1-s_2(b)+s_2(b_0)+n+\nu_2(b)+1\nonumber\\
   =&n+\nu_2(b)+1.\label{eq6}
\end{align}
On the other hand, one has
 \begin{align}
 \nu_2((b2^{n+1}+a)!)&=(b2^{n+1}+a)-s_2((b2^{n+1}+a))\nonumber\\
 &=b2^{n+1}+a-s_2(b)-s_2(a).\label{eq7}
 \end{align}
So in order to show that (\ref{h1}) is true, by
(\ref{eq5})-(\ref{eq7}) one only needs to show that
 \begin{align}
\nu_2\bigg(\sum_{i=a}^{2^{n}}l(i)\bigg)=\big(b2^{n+1}+a\big)
-\big(s_2(b)+\nu_2(b)+n+2\big).\label{109}
 \end{align}
To do so, one discusses the 2-adic valuation of $l(i)$ with
$a\leq i\leq 2^{n}$ in what follows.

Since $b_0\in A_{t-1}$ and
$0<2^{n+\nu_2(b)+1}+a-i\leq2^{n+\nu_2(b)+1}$, by the inductive
hypothesis and Lemma \ref{lem 1}, one can derive that
\begin{align}
&\nu_2(S(b2^{n+1},b2^{n+1}+a-i))\nonumber\\
=&\nu_2\bigg(S\bigg(b_02^{n+\nu_2(b)+2}+2^{n+\nu_2(b)+1},
b_02^{n+\nu_2(b)+2}+2^{n+\nu_2(b)+1}+a-i\bigg)\bigg)\nonumber\\
=&s_2(2^{n+\nu_2(b)+1}+a-i)-1\nonumber\\
=&s_2((2b_0+1)2^{n+\nu_2(b)+1}+a-i)-s_2(b_0)-1\nonumber\\
=&s_2(b2^{n+1}+a-i)-s_2(b)\label{eq8}
\end{align}
since $b=(2b_0+1)2^{\nu_2(b)}$ and $s_2(b)=s_2(b_0)+1$. Furthermore,
by Lemmas \ref{lem -1}, \ref{lem 1} and (\ref{eq8}) one can compute that
\begin{align}
  &\nu_2\big(i!S(2^{n},i)(b2^{n+1}+a-i)!S(b2^{n+1},b2^{n+1}+a-i)\big)\nonumber\\
  =&i-s_2(i)+s_2(i)-1+(b2^{n+1}+a-i)-s_2(b2^{n+1}+a-i)+s_2(b2^{n+1}+a-i)-s_2(b)\nonumber \\
  =&(b2^{n+1}+a)-s_2(b)-1.\label{eq9}
\end{align}
Then by (\ref{eq9}) one has
\begin{align}
  \nu_2(l(i))&=(b2^{n+1}+a)-s_2(b)-1+\nu_2(b2^{n+1}+a)-\nu_2(i)-\nu_2(b2^{n+1}+a-i). \label{eq10}
\end{align}

If $i=a$, then  by (\ref{eq10}) and noticing that $a\le 2^n$, one gets that
\begin{align}
  \nu_2(l(a))&=(b2^{n+1}+a)-s_2(b)-1+\nu_2(b2^{n+1}+a)-\nu_2(a)-\nu_2(b2^{n+1})\nonumber\\
  &=\big(b2^{n+1}+a\big)-\big(s_2(b)+\nu_2(b)+n+2\big).\label{eq11}
\end{align}

If $a<i\leq 2^{n}$ and $\nu_2(i)\leq \nu_2(b2^{n+1}+a)$, then
\begin{align}
\nu_2(i)-\nu_2(b2^{n+1}+a)+\nu_2(b2^{n+1}+a-i)\leq
\nu_2(b2^{n+1}+a-i)<n. \label{add1}
\end{align}
It then follows from (\ref{eq10}) and (\ref{add1}) that
\begin{align}\label{eq12}
\nu_2(l(i))>(b2^{n+1}+a)-s_2(b)-1-n>\big(b2^{n+1}+a\big)-\big(s_2(b)+\nu_2(b)+n+2\big).
\end{align}

If $a<i\leq 2^{n}$ and $\nu_2(i)> \nu_2(b2^{n+1}+a)$, then one has
\begin{align}
\nu_2(i)-\nu_2(b2^{n+1}+a)+\nu_2(b2^{n+1}+a-i)=\nu_2(i)\leq n.
\label{add2}
\end{align}
So  by (\ref{eq10}) and (\ref{add2}) one has
\begin{align}\label{eq13}
  \nu_2(l(i))\geq(b2^{n+1}+a)-s_2(b)-1-n>\big(b2^{n+1}+a\big)-\big(s_2(b)+\nu_2(b)+n+2\big).
\end{align}
Thus the desired result (\ref{109}) follows immediately from (\ref{eq11}), (\ref{eq12})
and (\ref{eq13}). So (\ref{h1}) holds if $b\in A_t$, which implies that
Theorem \ref{thm 2} is true if $b\in A_t$.

The proof of Theorem \ref{thm 2} is complete. \hfill$\Box$

\section{Proof of Theorem 1.4}

The purpose of this section is to prove Theorem \ref{thm 3}. Note that
its proof is different from the proofs of Theorems \ref{thm 1} to
\ref{thm 2}. So one provides the details of the proof of Theorem
\ref{thm 3}. Throughout this section, one always lets
$a,b,c,m,n\in \mathbb{Z}^+$, $1\leq a<2^{n+1}$,
$m\geq n+2+\lfloor\log_2b\rfloor$ and $c\geq1$
being odd. For any integers $i$ and $j$ with $0\le i\le j\le
b2^{n+1}+2^n$, one defines
\begin{align}\label{201}
h(i,j):={j\choose
i}\frac{(b2^{n+2}+a-i)!}{(b2^{n+2}+a-j)!}S(c2^m,b2^{n+2}+a-i)S(b2^{n+1}+2^{n},j).
\end{align}
Let
\begin{align}\label{202}
\begin{array}{ll}
\Delta_1:=\displaystyle\sum_{j=1}^{2^n}\sum_{i=0}^{j}h(i,j),&
\Delta_2:=\displaystyle\sum_{j=2^n+1}^{2^{n+1}-2} \sum_{i=0}^{j}h(i,j),\\
\Delta_3:=\displaystyle\sum_{i=0}^{2^{n+1}-1}h(i,2^{n+1}-1),&
\Delta_4:=\displaystyle\sum_{j=b2^{n+1}+1}^{b2^{n+1}+2^n} \sum_{i=0}^jh(i,j).
\end{array}
\end{align}
First one uses the lemmas in Section 2 and Theorem \ref{thm 2}  to prove
the following result.

\begin{lem}\label{lem 6}
Each of the following is true:
\begin{enumerate}
\item[{\rm (i)}] For $l=1$ and $4$, one has
$ \nu_2(\Delta_l) \left\{
  \begin{array}{ll}=s_2(a)-1,&{\text if} ~1\leq a\leq 2^n~{\text and~}s_2(b)=1,\\
   \geq s_2(a),& {\text otherwise};
  \end{array}
  \right.
$

\item[{\rm (ii)}] $\nu_2(\Delta_2) \geq s_2(a)$;

\item[{\rm (iii)}]
$ \nu_2(\Delta_3)\left\{
  \begin{array}{ll}
   =n,& {\text if}~a=2^{n+1}-1~{\text and}~s_2(b)=1, \\
   \geq s_2(a),&{\text otherwise};
  \end{array}
  \right.
$

\item[{\rm (iv)}]
$ \nu_2(\Delta_1+\Delta_2+\Delta_3+\Delta_4)\left\{
  \begin{array}{ll}
   =n,& {\text if}~a=2^{n+1}-1~{\text and}~s_2(b)=1, \\
   \geq s_2(a),&{\text otherwise}.
  \end{array}
  \right.
$
\end{enumerate}
\end{lem}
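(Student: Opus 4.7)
The plan is to analyze each of the four partial sums $\Delta_l$ separately by bounding $\nu_2(h(i,j))$ from below using the factorization of $h(i,j)$ into a binomial, a ratio of factorials, and two Stirling numbers. Since $b2^{n+1}+2^n=(2b+1)2^n$ with $2b+1$ odd, the inner Stirling factor $S((2b+1)2^n,j)$ in each $\Delta_l$ can be handled by the appropriate tool already stockpiled in Section~2: Lemma~\ref{lem 1} gives $\nu_2=s_2(j)-1$ on the range $1\le j\le 2^n$ of $\Delta_1$; Lemma~\ref{lem 2} gives $\nu_2\ge s_2(j)$ on the range of $\Delta_2$ and the exact value $n$ at $j=2^{n+1}-1$ for $\Delta_3$; and Theorem~\ref{thm 2}, applied with $c\to 2b+1$, gives $\nu_2=s_2(j')-1$ for $\Delta_4$ where $j=b2^{n+1}+j'$, $1\le j'\le 2^n$. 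For the outer factor $S(c2^m,b2^{n+2}+a-i)$, the hypothesis $m\ge n+2+\lfloor\log_2 b\rfloor$ ensures that $b2^{n+2}+a-i$ fits the regime where Theorem~\ref{thm 1}, after an appropriate $2$-adic decomposition of the argument, yields $\nu_2\ge s_2(b2^{n+2}+a-i)-1$.

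Next, apply Legendre (Lemma~\ref{lem -1}) to the factorial ratio to get $\nu_2\bigl((b2^{n+2}+a-i)!/(b2^{n+2}+a-j)!\bigr)=(j-i)+s_2(b2^{n+2}+a-j)-s_2(b2^{n+2}+a-i)$, and Kummer (Lemma~\ref{lem -2}) to the binomial. The $s_2(b2^{n+2}+a-i)$ contributions cancel; the Kummer inequality $s_2(b2^{n+2}+a-j)+s_2(j)\ge s_2(b2^{n+2}+a)$ together with digit additivity (Lemma~\ref{lem 4}), which gives $s_2(b2^{n+2}+a)=s_2(b)+s_2(a)$, then leads to $\nu_2(h(i,j))\ge s_2(a)+\text{offset}$, with offset $-1$ for $\Delta_1$ and $\Delta_4$, $0$ for $\Delta_2$, and a separate treatment for $\Delta_3$ using the exact value $n$. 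To upgrade to equality in (i)--(iii) when $s_2(b)=1$, I would identify the unique $(i,j)$ making Kummer and Theorem~\ref{thm 1} simultaneously tight, use Lemma~\ref{lem 7} to track $s_2$ of the arguments cleanly, and verify its contribution dominates modulo a higher power of $2$.

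The main obstacle is part (iv) in the regime $s_2(b)=1$ and $1\le a\le 2^n$: here (i) forces $\nu_2(\Delta_1)=\nu_2(\Delta_4)=s_2(a)-1$, yet the sum must satisfy $\nu_2\ge s_2(a)$, so a nontrivial cancellation between $\Delta_1$ and $\Delta_4$ modulo $2^{s_2(a)}$ has to be exhibited. The strategy I would pursue is to extract the leading monomial of each $\Delta_l$ modulo $2^{s_2(a)}$ by isolating the unique minimizing pairs $(i,j)$ in $\Delta_1$ and $\Delta_4$, writing each as an explicit product of Stirling and factorial pieces, and then matching coefficients using the Bell polynomial congruence (Lemma~\ref{lem 10}). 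The two minimizing indices turn out to share a common shape (both realize $s_2(a)-1$ through the same $M_a$-subset of binary digits of $a$), and a pairing of these contributions should yield the desired cancellation. The residual case $a=2^{n+1}-1$, $s_2(b)=1$ is then trivial: $\nu_2(\Delta_3)=n$ while every other $\nu_2(\Delta_l)\ge s_2(a)=n+1$ by (i)--(ii), so the sum has $\nu_2$ exactly $n$; and in the generic ``otherwise'' case all four $\nu_2(\Delta_l)\ge s_2(a)$ and the conclusion is immediate.
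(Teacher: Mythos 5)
There is a genuine gap in your treatment of the equality cases, and it stems from the assumption that the minimum of $\nu_2(h(i,j))$ is attained at a \emph{unique} pair $(i,j)$. It is not: for $\Delta_1$ (when $m>n+2+\lfloor\log_2b\rfloor$) every diagonal pair $(j,j)$ with $M_j\subseteq M_a$, $M_j\neq\varnothing$, makes Kummer's inequality tight and contributes a term of exact valuation $s_2(a)-1$, and the same phenomenon occurs for $\Delta_4$. The paper resolves this with a counting argument (Lemma \ref{lem 5}): the number of minimizing indices is $2^{s_2(a)}-1$ (odd) when $1\leq a\leq 2^n$, which forces $\nu_2(\Delta_1)=\nu_2(\Delta_4)=s_2(a)-1$, and $2^{s_2(a)-1}$ (even) when $2^n<a<2^{n+1}$, which is precisely how one gets $\nu_2(\Delta_l)\geq s_2(a)$ in that part of the ``otherwise'' case even though individual terms sit at level $s_2(a)-1$. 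Your scheme, which isolates one dominating minimizer, would wrongly output $\nu_2(\Delta_1)=s_2(a)-1$ when $s_2(b)=1$ and $2^n<a<2^{n+1}$, contradicting part (i). Without this parity count (or an equivalent device) neither the exact values nor that branch of the ``otherwise'' case can be established; you also need the sharper inputs that the paper actually uses for the outer factor, namely the exact value from Lemma \ref{lem 1} and, in the boundary case $m=n+2+\lfloor\log_2b\rfloor$, the improved bound of Lemma \ref{lem 2}, rather than the blanket bound $\geq s_2(\cdot)-1$ from Theorem \ref{thm 1}, since the equality analysis is sensitive to that extra $1$.

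Separately, the ``main obstacle'' you identify in part (iv) is illusory. When $1\leq a\leq 2^n$ and $s_2(b)=1$, parts (i)--(iii) already give $\nu_2(\Delta_1)=\nu_2(\Delta_4)=s_2(a)-1$ \emph{exactly}, so $\Delta_1/2^{s_2(a)-1}$ and $\Delta_4/2^{s_2(a)-1}$ are both odd and their sum is even; hence $\nu_2(\Delta_1+\Delta_4)\geq s_2(a)$ automatically, with no Bell-polynomial matching of leading monomials required (this is exactly why the paper dismisses (iv) as an immediate consequence of (i)--(iii)). Your proposed machinery for (iv) is therefore unnecessary, but as written it is also unsubstantiated (``should yield the desired cancellation''), so even granting your versions of (i)--(iii) the proposal does not close (iv); the correct and trivial route is the odd-plus-odd observation just described. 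The remaining structural choices in your plan (Lemma \ref{lem 1} for $\Delta_1$, Lemma \ref{lem 2} for $\Delta_2$, $\Delta_3$, Theorem \ref{thm 2} for $\Delta_4$, Legendre and Kummer for the factorial ratio and binomial) do match the paper.
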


\begin{proof}
Evidently, part (iv) follows immediately from parts (i)-(iii). So one needs only to
show parts (i)-(iii) which will be done in what follows. By Lemmas 2.1 and 2.2, one has
\begin{align}
&\nu_2\bigg({j\choose
i}\frac{(b2^{n+2}+a-i)!}{(b2^{n+2}+a-j)!}\bigg)\nonumber\\
=&s_2(i)+s_2(j-i)-s_2(j)+j-i+s_2(b2^{n+2}+a-j)-s_2(b2^{n+2}+a-i).\label{eq14}
\end{align}

(i). First one treats with $\Delta_1$. Let $1\leq j\leq 2^n$
and $0\leq i\leq j$. By Lemma \ref{lem 1}
\begin{align}
\nu_2(S(b2^{n+1}+2^{n},j))=s_2(j)-1.\label{add5}
\end{align}

Let $m>n+2+\lfloor\log_2b\rfloor$. Since $a<2^{n+1}$ and $1\leq
i\leq 2^n$, one has $b2^{n+2}+a-i<2^m$. By Lemma \ref{lem 1} one
obtains $\nu_2(S(c2^m, b2^{n+2}+a-i))=s_2(b2^{n+2}+a-i)-1$. Then
from (\ref{201}), (\ref{eq14}), (\ref{add5}) and Lemma
\ref{lem -2}, one obtains that
\begin{align}
\nu_2(h(i,j))& =s_2(i)+s_2(j-i)+j-i+s_2(b2^{n+2}+a-j)-2\nonumber\\
&\geq s_2(j)+s_2(b2^{n+2}+a-j)+j-i-2\nonumber\\
&\geq s_2((b2^{n+2}+a)-2\nonumber\\
&\geq s_2(a)-1, \label{eq15}
\end{align}
where equality holds if and only if $j=i$, $s_2(b)=1$ and
$s_2(b2^{n+2}+a-j)+s_2(j)=s_2(b2^{n+2}+a)$. So by (\ref{202}) one gets
\begin{align}
\Delta_1=2^{s_2(a)}\widetilde{\Delta}_1+2^{s_2(a)-1}\sum_{(i,j)\in
\widetilde{J}}\widetilde{h}(i,j),\label{add3}
\end{align}
where $\widetilde{\Delta}_1\in \mathbb{Z}^+$ and
$\widetilde{J}:=\{(i,j)\mid \widetilde{h}(i,j)~{\rm is}~{\rm odd},
1\le i\le j\le 2^n\}$. Then
\begin{align*}
\widetilde{J}&=\{(i,j)\mid  j=i, s_2(b)=1~ {\rm and}~ s_2(b2^{n+2}+a-j)+s_2(j)=s_2(b2^{n+2}+a)\}\\
&=\{(j,j)\mid   s_2(b)=1~ {\rm and}~ s_2(b2^{n+2}+a-j)+s_2(j)=s_2(b2^{n+2}+a)\}\\
&=\{1\le j\le 2^n\mid  s_2(b)=1~ {\rm and}~ s_2(2^{n+2}+a-j)+s_2(j)=s_2(2^{n+2}+a)\}.
\end{align*}
Thus by Lemma \ref{lem 5} one knows that
$|\widetilde{J}|=2^{s_2(a)}-1$ if $1\le a \le 2^n$ and
$2^{s_2(a)-1}$ else.

Furthermore,  by (\ref{add3}), one derives that $\nu_2(\Delta_1)$
equals $s_2(a)-1$ if $s_2(b)=1$ and $1\leq a\leq 2^n$, and is
greater than $s_2(a)$ otherwise.  So Lemma \ref{lem 6} (i) is true
if $l=1$ and $m>n+2+\lfloor\log_2b\rfloor$.

Now let $m=n+2+\lfloor\log_2b\rfloor$. If either
$2^n<a<2^{n+1}$, or $1\leq a\leq 2^n$ and $1\leq i<a$, then
one can check that the following is true:
$$2^m\le b2^{n+2}<b2^{n+2}+a-i<b2^{n+2}+a\le 2^{m+1}-1.$$
So Lemma \ref{lem 2} implies that
\begin{align}
\nu_2(S(c2^m,b2^{n+2}+a-i))\geq s_2(b2^{n+2}+a-i).\label{eq17}
\end{align}
Thus by Lemma \ref{lem -2}, (\ref{201}), (\ref{eq14}), (\ref{add5})
and (\ref{eq17}) one deduces that
\begin{align}
\nu_2(h(i,j))&\geq s_2(i)+s_2(j-i)+j-i+s_2(b2^{n+2}+a-j)-1\nonumber\\
&\geq s_2(j)+s_2(b2^{n+2}+a-j)+j-i-1\nonumber\\
&\geq s_2(b2^{n+2}+a)-1 \nonumber\\
&\geq s_2(a). \label{111}
\end{align}

If $1\leq a\leq 2^n$ and $a\leq i\leq j$, then $b2^{n+2}+a-i\le
b2^{n+2}\le 2^m$. Then by Lemma \ref{lem 1} one gets $\nu_2(S(c2^m,
b2^{n+2}+a-i))=s_2(b2^{n+2}+a-i)-1$. Hence by (\ref{eq14}),
(\ref{201}) and Lemma \ref{lem -2}, one has
\begin{align}
\nu_2(h(i,j))& =s_2(i)+s_2(j-i)+j-i+s_2(b2^{n+2}+a-j)-2\nonumber\\
&\geq s_2(j)+s_2(b2^{n+2}+a-j)+j-i-2\nonumber\\
&\geq s_2(b2^{n+2}+a)-2\nonumber\\
&\geq s_2(a)-1,\label{112}
\end{align}
with equality holding if and only if
\begin{align}\label{add4}
j=i, ~s_2(b)=1~{\rm and}~
s_2(b2^{n+2}+a-j)+s_2(j)=s_2(b2^{n+2}+a).
\end{align}
Since $1\leq j\leq 2^n$ and $a\leq i\leq j$, by Lemma \ref{lem 4} one
knows that (\ref{add4}) holds only when $i=j=a$ and $s_2(b)=1$. It
follows from (\ref{111}) and (\ref{112}) that $\nu_2(h(i, j))\ge
s_2(a)$ except for $i=j=a\in [1, 2^n]$ and $s_2(b)=1$, in which case
one has $\nu_2(h(a, a))=s_2(a)-1$. Then by (\ref{202}), one has
$\nu_2(\Delta _1)=s_2(a)-1$  if $a\in [1, 2^n]$ and $s_2(b)=1$, and
$\nu_2(\Delta _1)\ge s_2(a)$ otherwise. Thus Lemma \ref{lem 6} (i)
is true if $l=1$ and $m=n+2+\lfloor\log_2b\rfloor$. So the statement
for $\Delta _1$ is proved.

Now one handles $\Delta_4$. Note that $b2^{n+1}+1\leq j\leq
b2^{n+1}+2^n$ and $0\leq i\leq j$. Let $j=b2^{n+1}+j_0$ for some
integer $1\leq j_0\leq 2^n$. By Theorem \ref{thm 2} one has
\begin{align}
\nu_2\big(S(b2^{n+1}+2^n,j)\big)=\nu_2\big(S(b2^{n+1}+2^n,b2^{n+1}+j_0)\big)=
s_2(j_0)-1.\label{add6}
\end{align}

Since $m\geq n+2+\lfloor\log_2b\rfloor$, one has
$b2^{n+2}+a-j<b^{n+1}+a<2^m$. So by Lemmas \ref{lem 1} and \ref{lem
2} one gets
\begin{align} \nu_2\big(S(c2^m,b2^{n+2}+a-i)\big)\ge
s_2(b2^{n+2}+a-i)-1\label{add7}
\end{align}
and
\begin{align}\label{add8}
\nu_2\big(S(c2^m, b2^{n+2}+a-j)\big)=s_2(b2^{n+2}+a-j)-1
\end{align}
So by (\ref{201}), (\ref{eq14}), (\ref{add6})-(\ref{add8}) and
Lemma \ref{lem -2} one obtains that
\begin{align}\label{203}
\nu_2(h(i,j))&\geq s_2(i)+s_2(j-i)-s_2(j)+j-i+s_2(b2^{n+2}+a-j)+s_2(j_0)-2\nonumber\\
&\geq s_2(b2^{n+1}+a-j_0)+s_2(j_0)-2\nonumber\\
&\geq s_2(a)-1,
\end{align}
where equality holds if and only if $j=i$, $s_2(b)=1$ and
$s_2(b2^{n+1}+a-j_0)+s_2(j_0)=s_2(b2^{n+1}+a)$. It is similar to
$\Delta_1$ with $m\geq n+2+\lfloor\log_2b\rfloor$, by Lemma \ref{lem 5}
one has $\nu_2(\Delta _4)=s_2(a)-1$ if $a\in [1, 2^n]$ and
$s_2(b)=1$, and $\nu_2(\Delta _4)\ge s_2(a)$ otherwise. So Lemma
\ref{lem 6} (i) is true if $l=4$.

(ii). For $\Delta_2$, noticing that $2^n<j<2^{n+1}-1$, $0\leq i\leq j$ and
$m\geq n+2+\lfloor\log_2b\rfloor$, then by Lemmas \ref{lem -2}-\ref{lem 2}, one gets
\begin{align*}
\nu_2(S(c2^m,b2^{n+2}+a-i)S(b2^{n+1}+2^{n},j))\geq
s_2(b2^{n+2}+a-i)-1+s_2(j).
\end{align*}
So by (\ref{201}) and (\ref{eq14}), one has
\begin{align}
\nu_2(h(i,j)&\geq s_2(i)+s_2(j-i)+j-i+s_2(b2^{n+2}+a-j)-1\nonumber\\
&\geq s_2(b2^{n+2}+a)-1\nonumber\\
&\geq s_2(a).\label{204}
\end{align}
Hence by (\ref{202}) and (\ref{204}), one has $\nu_2(\Delta_2) \geq
s_2(a)$ as desired.

(iii). For $\Delta_3$, noting that $j=2^{n+1}-1$ and $0\leq
i\leq2^{n+1}-1$, it follows from Lemmas \ref{lem -2}-\ref{lem 2},
(\ref{202}) and (\ref{eq14}) that
\begin{align}
\nu_2(h(i,2^{n+1}-1)&\geq s_2(i)+s_2(j-i)+j-i+s_2(b2^{n+2}+a-j)-1\nonumber\\
&\geq s_2(b2^{n+2}+a-2^{n+1}+1)+s_2(2^{n+1}-1)-2\nonumber\\
&=s_2(b2^{n+2}+a-2^{n+1}+1)+n-1\nonumber\\
&\geq n,\label{205}
\end{align}
with equality holding if and only if $j=i=a=2^{n+1}-1$ and
$s_2(b)=1$. Since $1\le a<2^{n+1}$, one has $n+1\geq s_2(a)$. So by
(\ref{202}) and (\ref{205}), Lemma \ref{lem 6} (iii) follows
immediately.

This completes the proof of Lemma \ref{lem 6}.\hfill$\Box$\\  
\end{proof}

One can now use the lemmas presented in Section 2, Theorem \ref{thm 1} and
Lemma \ref{lem 6} to show Theorem \ref{thm 4}. The proof is of induction.\\

\noindent{\it Proof of Theorem 1.4.} By Lemma \ref{lem 0}, one gets that
\begin{align}
S(c2^m+b2^{n+1}+2^n,b2^{n+2}+a)&=\sum^{b2^{n+1}+2^n}_{j=1}\sum_{i=0}^{j}h(i,j)
=&\Delta_1+\Delta_2+\Delta_3+\Delta_4+\Delta, \label{000}
\end{align}
where $h(i,j)$ and $\Delta_l$ ($l=1,2,3,4$) are defined in
(\ref{201}) and (\ref{202}), respectively, and
\begin{align}
\Delta:=\sum_{j=2^{n+1}}^{b2^{n+1}}\sum_{i=0}^{j}h(i,j).\label{206}
\end{align}
First one deals with the 2-adic valuation of $h(i,j)$ with
$2^{n+1}\leq j\leq b2^{n+1}$ and $0\leq i\leq j$. Let
$j=j_12^{n+1}+j_2$ for some integers $1\leq j_1\leq b$ and $0\leq
j_2<2^{n+1}$.

If $j_2=0$, then $j=j_12^{n+1}$. So by Lemmas \ref{lem -2}-\ref{lem 2}
and (\ref{201}), one has
\begin{align}
\nu_2(h(i,j) &\geq
\nu_2\bigg(\frac{(b2^{n+2}+a-i)!}{(b2^{n+2}+a-j)!}\bigg)
+\nu_2(S(c2^m,b2^{n+2}+a-i))\nonumber\\
&\geq j-i+s_2(b2^{n+2}+a-j)-s_2(b2^{n+2}+a-i)+s_2(b2^{n+2}+a-i)-1\nonumber\\
&\geq s_2(b2^{n+2}+a-j)-1\nonumber\\
&=s_2((2b-j_1)2^{n+1}+a)-1\nonumber\\
&\geq s_2(a)\label{eq23}
\end{align}
since  $s_2(2b-j_1)\geq 1$ and $a<2^{n+1}$.

If $0<j_2<2^{n+1}$, by Theorem \ref{thm 1} one has
\begin{align}
\nu_2\big(S(b2^{n+1}+2^n,j)\big)=\nu_2\big(S(b2^{n+1}+2^n,j_12^{n+1}+j_2)\big)\geq
s_2(j_2)-1.\label{113}
\end{align}
Thus by Lemmas \ref{lem -2}-\ref{lem 1}, (\ref{201}), (\ref{eq14})
and (\ref{113}) one deduces
\begin{align}
\nu_2(h(i,j) &\geq
\nu_2\bigg(\frac{(b2^{n+2}+a-i)!}{(b2^{n+2}+a-j)!}\bigg)
+\nu_2(S(c2^m,b2^{n+2}+a-i))+\nu_2\big(S(b2^{n+1}+2^n,j)\big)\nonumber\\
& \geq j-i+s_2(b2^{n+2}+a-j)+s_2(j_2)-2\nonumber\\
&\geq s_2(j_2)+s_2((2b-j_1)2^{n+1}+a-j_2)-2\nonumber\\
&\geq s_2((2b-j_1)2^{n+1}+a)-2\nonumber\\
&=s_2(2b-j_1)+s_2(a)-2.\label{eq24}
\end{align}

Let $A_t$ be defined as in (\ref{add10}). Then
$\mathbb{Z}^+=\bigcup_{t=1}^\infty A_t$. One proves Theorem \ref{thm
3} by induction on $t$. First one considers that the case $t=1$. Let
$b\in A_1$. Then $s_2(b)=1$. If $0<j_2<2^{n+1}$, then $1\le j_1<b$.
So $s_2(2b-j_1)\ge 2$. Thus by (\ref{eq24}) one has that
$\nu_2(h(i,j))\geq s_2(a)$ if $0<j_2<2^{n+1}$. Furthermore, by
(\ref{206}) and (\ref{eq23}) one gets
\begin{align}
\nu_2(\Delta) \geq s_2(a).\label{006}
\end{align}
By Lemma \ref{lem 6} (iv), (\ref{000})  and (\ref{006}),
Theorem \ref{thm 3} for the case $s_2(b)=1$ follows immediately.
That is, Theorem \ref{thm 3} is proved if $t=1$.

Now let $t\geq 2$. Assume that Theorem \ref{thm 3} is true for any
integers $b\in A_1\cup\cdots\cup A_{t-1}.$  In what follows one
proves that Theorem \ref{thm 3} is true for the case $t$,
namely, for the case that $b\in A_t$.

For $b\in A_t$, let $b=2^{r_1}+2^{r_2}+...+2^{r_t}$
be the 2-adic expansion of $b$, where $r_1>r_2>\cdots>r_t$. Claim that
if $1\leq j_1<b$, then $s_2(2b-j_1)=1$ if and only if
$b=2^{r_1}+\frac{j_1}{2}$. One first notices that if $1\leq j_1<b$, then
$$2^{r_1+2}>2b>2b-j_1>2^{r_1}.$$
So $s_2(2b-j_1)=1$ if and only if  $2b-j_1=2^{r_1+1}$, i.e.,
$b=2^{r_1}+\frac{j_1}{2}$. The claim is proved. In the following one
handles $\Delta$. For this purpose, one needs to treat with $h(i, j)$.
Consider the following cases.

If $0<j_2<2^{n+1}$ and $s_2(2b-j_1)\geq 2$, then by (\ref{eq24}) one derives that
\begin{align}
\nu_2(h(i,j))\geq s_2(a).\label{209}
\end{align}

If $0<j_2<2^{n+1}$ and $s_2(2b-j_1)=1$, then by the claim one has
$b=2^{r_1}+\frac{j_1}{2}$. It then follows that
\begin{align}
S(b2^{n+1}+2^n,j_12^{n+1}+j_2)=S\big(2^{r_1+n+1}+\frac{j_1}{2}2^{n+1}+2^n,
\frac{j_1}{2}2^{n+2}+j_2\big). \label{114}
\end{align}
Since $2b-j_1=2^{r_1+1}$, one has $j_1=2^{r_2+1}+\cdots+2^{r_t+1}$,
which implies that $s_2(\frac{j_1}{2})=t-1$ and so $\frac{j_1}{2}\in A_{t-1}$.
Hence the inductive hypothesis applied to (\ref{114}) gives us that
\begin{align}
  \nu_2(S(b2^{n+1}+2^n,j_12^{n+1}+j_2))
  \left\{
  \begin{array}{ll}
=s_2(j_2)-1=n,&{\rm if} ~j_2=2^{n+1}-1,\\
\geq s_2(j_2),& {\rm if}~ 0<j_2<2^{n+1}-1.
  \end{array}
  \right.\label{115}
\end{align}
For $0<j_2<2^{n+1}-1$, it follows from Lemmas \ref{lem -2}-\ref{lem 2},
(\ref{201}), (\ref{eq14}) and (\ref{115}) that
\begin{align}
\nu_2(h(i,j))& \geq j-i+s_2(b2^{n+2}+a-j)+s_2(j_2)-1\nonumber\\
&\geq s_2(j_2)+s_2(b2^{n+2}-j_12^{n+1}+a-j_2)-1\nonumber\\
&\geq s_2((2b-j_1)2^{n+1}+a)-1\nonumber\\
&=s_2(a).\label{116}
\end{align}
For $j_2=2^{n+1}-1$, since $m\geq n+2+\lfloor\log_2b\rfloor=n+2+r_1$, one has
\begin{align}
b2^{n+2}+a-j=(2b-j_1)2^{n+1}+a-j_2=2^{n+2+r_1}+a-j_2\leq2^{n+r_1+2}\leq
2^m\label{207}.
\end{align}
Then by Lemma \ref{lem 1} and (\ref{207}) one deduces that
\begin{align}
S(c2^m,b2^{n+2}+a-j)=s_2(b2^{n+2}+a-j)-1.\label{208}
\end{align}
It then follows from $1\leq a< 2^{n+1}$, Lemmas \ref{lem -2}-\ref{lem 2}, (\ref{201}),
(\ref{eq14}), (\ref{115}) and (\ref{208}) that
\begin{align}
\nu_2(h(i,j)&\geq s_2(b2^{n+2}+a-j)-1+s_2(j_2)-1+ j-i\nonumber\\
&\geq s_2((2b-j_1)2^{n+1}+a-j_2)+ s_2(j_2)-2\nonumber\\
&=s_2((2b-j_1)2^{n+1}+a-2^{n+1}+1)+n-1\nonumber\\
&\geq n, \label{117}
\end{align}
with equality holding if and only if $j=i$, $s_2(2b-j_1)=1$ and $a=2^{n+1}-1$.

Finally, by (\ref{eq23}),(\ref{209}), (\ref{116}) and (\ref{117})
one obtains that if $b\in A_t$, then
\begin{align}
\nu_2(\Delta)\left\{
  \begin{array}{lc}
   =n&{\rm if} ~a=2^{n+1}-1,\\
\geq s_2(a)& {\rm if}~ a<2^{n+1}-1.
  \end{array}
  \right. \label{210}
\end{align}
Hence Lemma \ref{lem 6} (iv) together with (\ref{000}) and (\ref{210})
concludes that Theorem \ref{thm 3} is true if $b\in A_t$.

The proof of Theorem \ref{thm 3} is complete. \hfill$\Box$

\section{Proof of Theorem 1.3}

For any positive integer $k$, one defines $\theta(k)$ to be the
largest integer $l$ with $1\le l\le s_2(k)$ such that $\{m_{l},
m_{l-1}, ..., m_1\}$ is a set of consecutive integers, where
$k=2^{m_1}+2^{m_2}+\cdots+2^{m_{s_2(k)}}$ is the 2-adic expansion of
$k$ and $m_1>m_2>\cdots>m_{s_2(k)}$. Then
$\lceil\log_2k\rceil=m_1+1$. First Theorems \ref{thm 1} and
\ref{thm 3} are used to show the following lemma.

\begin{lem}\label{lem 8}
Let $n,k,a,c\in\mathbb{Z}^+$ be such that $3\leq k\le 2^n$, $s_2(k)\ge 2$ and $1\le
a\le\lceil\frac{k}{2}\rceil-1$. Suppose that $k$ is neither a power
of 2 nor a power of 2 minus 1. Then one has
$$\nu_2(S(c2^n-a,k-2a))=s_2(k)-\lceil\log_2k\rceil+\nu_2(a)$$
if either $a=\sum_{i=m_{\theta(k)}}^{m_1}2^{i-1}$ with $\theta(k)<s_2(k)$
or $a=\sum_{i=m_{\theta(k)}+1}^{m_1}2^{i-1}$ with $\theta(k)=s_2(k)$, and
$$\nu_2(S(c2^n-a,k-2a))>s_2(k)-\lceil\log_2k\rceil+\nu_2(a)$$ otherwise.
\end{lem}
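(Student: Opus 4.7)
Set $M:=\nu_2(a)$. Since $s_2(k)\ge 2$ rules out $k=2^n$, we have $n\ge m_1+1$; together with $a<2^{m_1}$ this yields $M\le m_1-1<n$ and $c2^n-a=c''\cdot 2^M$ with $c''$ odd, and a quick size estimate shows $c''\ge 3$. Write $R:=s_2(k)+M-m_1-1$ for the claimed value (using $\lceil\log_2 k\rceil=m_1+1$). The skeleton of the argument rests on
\[
s_2(k-2a)\ge s_2(k)-s_2(a),\qquad s_2(a)\le m_1-M,
\]
which together give $s_2(k-2a)\ge R+1$; equality in the first holds iff the binary support of $2a$ lies in that of $k$, and in the second iff $a=2^{m_1}-2^M$.

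I split by the size of $k-2a$. \emph{Case~1 ($k-2a\le 2^M$):} Lemma~\ref{lem 1} gives $\nu_2(S(c''2^M,k-2a))=s_2(k-2a)-1\ge R$, with equality iff both rigidity conditions hold; translating them through the definition of $\theta(k)$ pins $a$ down exactly to the value in Case~A (when $\theta(k)<s_2(k)$) or Case~B (when $\theta(k)=s_2(k)$), and strict inequality holds otherwise. \emph{Case~2 ($2^M<k-2a\le 2^{M+1}-1$):} Lemma~\ref{lem 2} (valid as $c''\ge 3$) yields $\nu_2\ge s_2(k-2a)\ge R+1$, or, in the boundary case $k-2a=2^{M+1}-1$, $\nu_2=M>R$ because $s_2(k)\le m_1$.

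\emph{Case~3 ($k-2a\ge 2^{M+1}$):} Because $2a\equiv 0\pmod{2^{M+1}}$, the last $M+1$ bits of $k-2a$ match those of $k$, so writing $k-2a=b'\,2^{M+1}+a'$ we have $a'=\sum_{m_j\le M}2^{m_j}$. Theorem~\ref{thm 1} (with its ``$n$'' $=M$) yields $\nu_2\ge s_2(a')-1$, and $s_2(a')-1>R$ is equivalent to $|\{j:m_j>M\}|<m_1-M$. This holds automatically unless positions $M+1,\dots,m_1$ are all bits of $k$, i.e., unless $M\ge m_1-\theta(k)$. In that exceptional branch one has $M\ge 1$ (the case $M=0$ forces $k=2^{m_1+1}-2$ and $R=-1$, so the inequality is trivial) and $a\ne 2^{m_1}-2^M$ (otherwise $k-2a<2^{M+1}$). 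Non-specialness of $a$ yields $u:=a/2^M<2^{m_1-M}-1$; this forces bit $M+1$ of $k-2a$ to vanish and $k-2a\ge 2^{M+2}$, so $k-2a=\tilde b\,2^{M+2}+\tilde a$ with $1\le\tilde a<2^{M+1}$ and $\tilde b\ge 1$. A key computation shows $(c''-1)/2-\tilde b=c\,2^{n-M-1}-2^{m_1-M-1}$ (independent of $u$), with $2$-adic valuation $m_1-M-1$; combined with $\tilde b<2^{m_1-M-1}$ this lets one write $c''2^M=c_\star\,2^{m_1}+\tilde b\,2^{M+1}+2^M$ with $c_\star=c\,2^{n-m_1}-1$ odd, meeting the side condition $m\ge n+2+\lfloor\log_2 b\rfloor$ of Theorem~\ref{thm 3}. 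Theorem~\ref{thm 3} then gives $\nu_2\ge s_2(\tilde a)=|\{j:m_j\le M\}|=R+1$; the degenerate sub-case $\tilde a=2^{M+1}-1$ would force $k=2^{m_1+1}-1$, which is excluded by hypothesis.

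\textbf{Main obstacle.} The crux is the exceptional branch of Case~3, where Theorem~\ref{thm 1} is one unit short and the missing unit must be extracted from Theorem~\ref{thm 3}. The delicate point is verifying that $c''2^M$ fits the rigid additive form $c\,2^m+b\,2^{n+1}+2^n$ of Theorem~\ref{thm 3} with the constraint $m\ge n+2+\lfloor\log_2 b\rfloor$; this constraint looks restrictive but is ensured by the identity for $(c''-1)/2-\tilde b$ above together with $n\ge m_1+1$. Tracking this decomposition through the bookkeeping, while simultaneously ruling out the degenerate case $\tilde a=2^{M+1}-1$, is what makes the proof of Lemma~\ref{lem 8} noticeably more involved than those of Theorems~\ref{thm 1}--\ref{thm 3}.
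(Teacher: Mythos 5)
Your proposal follows essentially the same route as the paper's proof: the equality cases come from Lemma \ref{lem 1}, and the strict inequality is obtained from Lemma \ref{lem 2} and Theorem \ref{thm 1}, with the hard branch (your ``exceptional branch'', which is exactly the paper's range $\nu_2(a)\ge m_{\theta(k)}-1$) handled by the identical decomposition $c2^n-a=c_\star2^{m_1}+\tilde b\,2^{\nu_2(a)+1}+2^{\nu_2(a)}$, $k-2a=\tilde b\,2^{\nu_2(a)+2}+\tilde a$ fed into Theorem \ref{thm 3}; only the bookkeeping differs (you split by the size of $k-2a$, the paper by the size of $\nu_2(a)$). The one slip is the assertion $1\le\tilde a$: it can fail, e.g. $k=2^{m_1}+2^{m_1-1}$ with $\nu_2(a)=m_1-2$ (say $k=24$, $a=4$, where $\tilde a=0$), so Theorem \ref{thm 3} is not literally applicable there; but in that situation $s_2(\tilde a)=0=R+1$, so the required bound $\nu_2>R$ is trivial and the argument is repaired by one sentence --- a point the paper's own proof also glosses over for its $u$ in the subcase $b>0$.
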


\begin{proof}
First, one writes
\begin{align}\label{212}
k=\sum_{i=m_{\theta(k)}}^{m_1}2^i+\sum_{j=\theta(k)+1}^{s_2(k)}2^{m_j}.
\end{align}
Note that the second sum in (\ref{212}) vanishes if $\theta(k)=s_2(k)$.
Obviously, $m_1=m_l+l-1$ if $1\leq l\leq \theta(k)$ and $m_{\theta(k)}\ge
m_{{\theta(k)}+1}+2$ if $\theta(k)<s_2(k)$.

If $a=\sum_{i=m_{\theta(k)}}^{m_1}2^{i-1}$ with $\theta(k)<s_2(k)$,
then by (\ref{212}) one infers that
$k-2a=\sum_{j=\theta(k)+1}^{s_2(k)}2^{m_j}$ and
$\nu_2(c2^n-a)=\nu_2(a)=m_{\theta(k)}-1=m_1-\theta(k)$.
It then follows from $\lceil\log_2k\rceil=m_1+1$ that
\begin{align}\label{add16}
s_2(k-2a)=s_2(k)-\theta(k)
\end{align} and
\begin{align}\label{add17}\theta(k)=m_1-\nu_2(a)=\lceil\log_2k\rceil-1-\nu_2(a).
\end{align}
Since $m_{\theta(k)}\ge m_{{\theta(k)}+1}+2$, one has $k-2a<
2^{m_{\theta(k)}-1}=2^{\nu_2(c2^n-a)}$. It follows from Lemma
\ref{lem 1}, (\ref{add16}) and (\ref{add17}) that
\begin{align*}
\nu_2(S(c2^n-a,k-2a))
=s_2(k-2a)-1=s_2(k)-\lceil\log_2k\rceil+\nu_2(a)
\end{align*}
as required. Hence Lemma \ref{lem 8} is proved if
$a=\sum_{i=m_{\theta(k)}}^{m_1}2^{i-1}$ with $\theta(k)<s_2(k)$.

If $a=\sum_{i=m_{\theta(k)}+1}^{m_1}2^{i-1}$ with
$\theta(k)=s_2(k)$, then by (\ref{212}) one deduces that
$k-2a=2^{m_{\theta(k)}}$ and
$\nu_2(c2^n-a)=\nu_2(a)=m_{\theta(k)}=m_1+1-\theta(k)=\lceil\log_2k
\rceil-s_2(k)$ since $\lceil\log_2k\rceil=m_1+1$. Hence
$s_2(k)-\lceil\log_2k\rceil+\nu_2(a)=0$. It then follows from Lemma
\ref{lem 1} that
\begin{align*}
\nu_2(S(c2^n-a,k-2a))
=s_2(2^{m_{\theta(k)}})-1=0=s_2(k)-\lceil\log_2k\rceil+\nu_2(a).
\end{align*}
Thus Lemma \ref{lem 8} is proved if
$a=\sum_{i=m_{\theta(k)}+1}^{m_1}2^{i-1}$ with $\theta(k)=s_2(k)$.

Now one treats the remaining case that neither $a=\sum_{i=m_{\theta(k)}}
^{m_1}2^{i-1}$ with $\theta(k)<s_2(k)$ nor $a=\sum_{i=m_{\theta(k)}+1}
^{m_1}2^{i-1}$ with $\theta(k)=s_2(k)$. For this remaining case, one claims that
\begin{align}\label{add9}
\nu_2(S(c2^n-a,k-2a))\geq s_2(k)-m_1+\nu_2(a).
\end{align}
From the claim (\ref{add9}) and noting that
$\lceil\log_2k\rceil=m_1+1$, one derives that
\begin{align*}
\nu_2(S(c2^n-a,k-2a))>s_2(k)-\lceil\log_2k\rceil+\nu_2(a).
\end{align*}
So Lemma \ref{lem 8} holds for the remaining case that neither
$a=\sum_{i=m_{\theta(k)}}^{m_1}2^{i-1}$ with $\theta(k)<s_2(k)$
nor $a=\sum_{i=m_{\theta(k)}+1}^{m_1}2^{i-1}$ with $\theta(k)=s_2(k)$.
Thus one needs only to prove that the claim (\ref{add9}) is true,
which will be done in what follows.

If $\nu_2(a)<m_{s_2(k)}$, then $s_2(k)-(m_1-\nu_2(a))\leq
s_2(k)-(m_1-m_{s_2(k)}+1)\leq 0$ since $s_2(k)\le m_1-m_{s_2(k)}+1$.
This concludes that the claim (\ref{add9}) is true if
$\nu_2(a)<m_{s_2(k)}$.

If $m_{s_2(k)}\leq \nu_2(a)<m_{\theta(k)}-1$, then
$\theta(k)<s_2(k)$ and there is exactly one integer $t$ with
$\theta(k)<t\le s_2(k)$ such that $m_{t}\leq \nu_2(a)<m_{t-1}$. Then
by the definition of $\theta(k)$ one knows that $\{\nu_2(a),
m_{t-1},..., m_{\theta(k)}, ..., m_1\}$ is not consisting of
consecutive integers. This implies that
$s_2(2^{m_1}+\cdots+2^{m_{t-1}}+2^{m_t})=s_2(2^{m_1}+\cdots
+2^{m_{t-1}}+2^{\nu_2(a)})\le m_1-\nu_2(a)$. Therefore
\begin{align}
s_2(2^{m_t}+\cdots+2^{m_{s_2(k)}})&=
s_2(k)-s_2(2^{m_1}+\cdots+2^{m_{t-1}}+2^{m_t})+1\nonumber\\
&\geq s_2(k)-(m_1-\nu_2(a))+1.\label{add15}
\end{align}
Since $\nu_2(c2^n-a)=\nu_2(a)$ and $m_{t}\leq \nu_2(a)<m_{t-1}$, one
may write $c2^n-a=c_12^{\nu_2(a)}$ and
$k-2a=c_22^{\nu_2(a)+1}+2^{m_t}+\cdots+2^{m_{s_2(k)}}$ with $c_1$
and $c_2$ being integers. Then by Theorem \ref{thm 1} and
(\ref{add15}) one deduces that
\begin{align*}
\nu_2(S(c2^n-a,k-2a))&=\nu_2(S(c_12^{\nu_2(a)},
c_22^{\nu_2(a)+1}+2^{m_t}+\cdots+2^{m_{s_2(k)}})\\
&\geq s_2(2^{m_t}+\cdots+2^{m_{s_2(k)}})-1 \\
&\geq s_2(k)-m_1+\nu_2(a)
\end{align*}
as desired. Hence the claim (\ref{add9}) is proved if
$m_{s_2(k)}\leq \nu_2(a)<m_{\theta(k)}-1$.

If $m_{\theta(k)}-1\leq \nu_2(a)\leq m_1-1$, then by (\ref{212}) one can write
\begin{align}
k-2a&=\sum_{i=\nu_2(a)+1}^{m_1}2^i-2a+u=b2^{\nu_2(a)+2}+u\label{eq32}
\end{align}
and
\begin{align}
c2^n-a&=c_32^{m_1}+\sum_{i=\nu_2(a)}^{m_1-1}2^i+2^{\nu_2(a)}-a
=c_32^{m_1}+b2^{\nu_2(a)+1}+2^{\nu_2(a)},\label{eq33}
\end{align}
where $c_3\in\mathbb{Z}^+$ and $u$ and $b$ are defined as follows:
\begin{align}\label{ad3}
u:=\sum_{i=m_{\theta(k)}}^{\nu_2(a)}2^i+\sum_{j=\theta(k)+1}^{s_2(k)}2^{m_j},
~~~~~~
b:=\bigg(\sum_{i=\nu_2(a)}^{m_1-1}2^i-a\bigg)\bigg/2^{\nu_2(a)+1}.
\end{align}
Note that the first sum of $u$ vanishes if
$\nu_2(a)=m_{\theta(k)}-1$ and the second sum of $u$ vanishes if
$\theta(k)=s_2(k)$. By (\ref{212}), one has
\begin{align}\label{213}
s_2(u)=s_2(k)-s_2\big(\sum_{i=\nu_2(a)+1}^{m_1}2^i\big)=s_2(k)-m_1+\nu_2(a).
\end{align}
In the following one shows that $u<2^{\nu_2(a)+1}-1$. If
$\theta(k)=s_2(k)$, then by (\ref{212}) one has
$k=\sum_{i=m_{\theta(k)}}^{m_1}2^i$. But $k$ is not a power of 2
minus 1. So $m_{\theta(k)}\geq1$. Thus by (\ref{ad3}) one knows that
$u=\sum_{i=m_{\theta(k)}}^{\nu_2(a)}2^i<2^{\nu_2(a)+1}-1$. If
$\theta(k)<s_2(k)$, then $m_{\theta(k)}\ge m_{{\theta(k)}+1}+2$.
Hence by (\ref{ad3}) one yields that $u<2^{\nu_2(a)+1}-1$. Suppose that
$b<0$. Then from (\ref{eq32}) one deduces that
$$
k-2a\le -2^{\nu_2(a)+2}+u<-2^{\nu_2(a)+2}+2^{\nu_2(a)+1}-1<0,
$$
which is impossible. So $b\geq 0$.

If $b>0$, then by (\ref{ad3}) one has $m_1\ge
\nu_2(a)+2+\lfloor\log_2 b\rfloor.$ Since $u<2^{\nu_2(a)+1}-1$, it
then follows from (\ref{eq32})-(\ref{213}) and Theorem \ref{thm 3} that
\begin{align*}
\nu_2(S(c2^n-a,k-2a))&=\nu_2\big(S\big(c_32^{m_1}+b2^{\nu_2(a)+1}+
2^{\nu_2(a)},b{2^{\nu_2(a)+2}}+u\big)\big)\\
&\geq s_2(u)=s_2(k)-m_1+\nu_2(a).
\end{align*}
The claim (\ref{add9}) is proved if $m_{\theta(k)}-1\leq
\nu_2(a)\leq m_1-1$ with $b>0$.

If $b=0$, then by (\ref{eq32}) one has $u>0$ since $k-2a>0$. In what
follows one shows that $u>2^{\nu_2(a)}$. Suppose that $0<u\le
2^{\nu_2(a)}$. From (\ref{ad3}) one infers that either
$\theta(k)<s_2(k)$ with $\nu_2(a)=m_{\theta(k)}-1$, or
$\theta(k)=s_2(k)$ with $\nu_2(a)=m_{\theta(k)}$. If
$\theta(k)<s_2(k)$ with $\nu_2(a)=m_{\theta(k)}-1$, then by
(\ref{ad3}) one gets $a=\sum_{i=m_{\theta(k)}-1}^{m_1-1}2^i$ since
$b=0$. It contradicts with the assumption that
$a\neq\sum_{i=m_{\theta(k)}}^{m_1}2^{i-1}$ if $\theta(k)<s_2(k)$. If
$\theta(k)=s_2(k)$ with $\nu_2(a)=m_{\theta(k)}$, it then follows
from (\ref{ad3}) and $b=0$ that
$a=\sum_{i=m_{\theta(k)}}^{m_1-1}2^i$, which contradicts with the
assumption that $a\neq\sum_{i=m_{\theta(k)}+1}^{m_1}2^{i-1}$ if
$\theta(k)<s_2(k)$. Hence $u>2^{\nu_2(a)}$. Note that
$u<2^{\nu_2(a)+1}-1$. Now by (\ref{eq32})-(\ref{213}) and Lemma
\ref{lem 2} one deduces that
\begin{align*}
\nu_2(S(c2^n-a,k-2a))=\nu_2\big(S\big(c_32^{m_1}+
2^{\nu_2(a)},u\big)\big) \geq s_2(u)=s_2(k)-m_1+\nu_2(a)
\end{align*}
as desired. The claim (\ref{add9}) is proved if $m_{\theta(k)}-1\leq
\nu_2(a)\leq m_1-1$ with $b=0$.

This concludes the proof of Lemma \ref{lem 8}. \hfill$\Box$
\end{proof}\\

One is now in a position to show Theorem 1.3.\\

\noindent{\it Proof of Theorem 1.3.} Suppose that (\ref{c2}) is true. Then
using (\ref{c2}) with $a=2c$ and $b=c$, one can easily derive that (\ref{c1})
holds. So one only needs to show that (\ref{c2}) is true, which will be done
in the following.

To prove (\ref{c2}), one uses (\ref{1}) and (\ref{2}) with $p=2$, $m=(2b-a)2^n$,
$v=1$ and $n$ replaced by $(a-b)2^n$, and considers the coefficients of $x^k$:
\begin{align}\label{add13}
S(a2^n,k)&\equiv \sum_{j=0}^{(a-b)2^n}{(a-b)2^n\choose j}
S(j+(2b-a)2^n,k-2((a-b)2^n-j))\nonumber\\
&=S(b2^n,k)+\sum_{j=(a-b)2^n-\lceil\frac{k}{2}\rceil+1}^{(a-b)2^n-1}{(a-b)2^n\choose j}
S(j+(2b-a)2^n,k-2((a-b)2^n-j))\nonumber\\
&=S(b2^n,k)+\sum_{i=1}^{\lceil\frac{k}{2}\rceil-1}{(a-b)2^n\choose
i}S(b2^n-i,k-2i)\mod2^{n+\nu_2(a-b)}.
\end{align}
In then follows from (\ref{add13}) that
\begin{align}\label{add14}
S(a2^n,k)-S(b2^n,k)&\equiv\sum_{i=1}^{\lceil\frac{k}
{2}\rceil-1}{(a-b)2^n\choose i}S(b2^n-i,k-2i)\mod2^{n+\nu_2(a-b)}.
\end{align}

In what follows one discusses the 2-adic valuation of a general term of
(\ref{add14}) with $1\le i\le \lceil\frac{k}{2}\rceil-1 $. Let
$a-b=c_02^{\nu_2(a-b)}$ with $c_0\geq 1$ being odd. One first notices
that $i\leq\lceil\frac{k}{2}\rceil-1 <2^n$. So by Lemma \ref{lem 7}
one infers that
\begin{align}
s_2(c_02^{n+\nu_2(a-b)}-i)=s_2(c_0)+n+\nu_2(a-b)-\nu_2(i)-s_2(i).\label{ad4}
\end{align}
It then follows from Lemma \ref{lem -2} and (\ref{ad4}) that
\begin{align}
&\nu_2\big({(a-b)2^n\choose i}S(b2^n-i,k-2i)\big)\nonumber\\
&=s_2(i)+s_2(c_02^{n+\nu_2(a-b)}-i)-s_2(c_02^{n+\nu_2(a-b)})
+\nu_2(S(b2^n-i, k-2i))\nonumber\\
&=n+\nu_2(a-b)-\nu_2(i)+\nu_2(S(b2^n-i,k-2i)).\label{add18}
\end{align}
One considers the following two cases.

{\it Case 1.} $s_2(k)=1$. Then one may write $k=2^m$. If $m=2$, then
by (\ref{1.1}) one has
\begin{align*}
&\nu_2(S(a2^n,4)-S(b2^n,4))\nonumber\\
=&\nu_2\Big(\frac{1}{6}(4^{a2^n-1}-3^{a2^n}+3\cdot2^{a2^n-1}-1)
-\frac{1}{6}(4^{b2^n-1}-3^{b2^n}+3\cdot2^{b2^n-1}-1)\Big)\nonumber\\
=&\nu_2(3^{b2^n}(3^{(a-b)2^n}-1))-1
\end{align*}
By Lemma \ref{lem 9}, one has $\nu_2(3^{(a-b)2^n}-1)=n+\nu_2(a-b)+2$. It follows that
$$\nu_2(S(a2^n,4)-S(b2^n,4))=n+\nu_2(a-b)-\lceil\log_24\rceil+s_2(4)+\delta(4)$$
since $\delta(4)=2$. Namely, Theorem \ref{thm 4} holds if $m=2$.

Now let $m\ge 3$. So $ 1\le i\le 2^{m-1}-1$. If $i=2^{m-2}$, then by Lemma \ref{lem -3}
\begin{align}\label{add19}
\nu_2(S(b2^n-i,2^m-2i))=\nu_2(S(b2^n-2^{m-2},2^{m-1}))=0.
\end{align}
Thus by (\ref{add18}) and (\ref{add19}) one obtains that
\begin{align}
\nu_2\big({(a-b)2^n\choose i}S(b2^n-i,2^m-2i)\big)
=n+\nu_2(a-b)-(m-2).\label{add20}
\end{align}

If $i\neq 2^{m-2}$, then $\nu_2(i)<2^{m-2}$ since $i\le 2^{m-1}-1$.
It then follows from (\ref{add18}) that
\begin{align}
\nu_2\big({(a-b)2^n\choose i}S(b2^n-i,2^m-2i)\big)
&>n+\nu_2(a-b)-(m-2)+\nu_2(S(b2^n-i,2^m-2i))\nonumber\\
&\ge n+\nu_2(a-b)-(m-2).\label{add21}
\end{align}
Hence by (\ref{add14}), (\ref{add20}) and (\ref{add21}) one derives that
\begin{align*}
\nu_2(S(a2^n,2^m)-S(b2^n,2^m))&=n+\nu_2(a-b)-m+2\\
&=n+\nu_2(a-b)-\lceil\log_22^m\rceil+s_2(2^m)+\delta(2^m)
\end{align*}
since $\delta(2^m)=1$. So (\ref{c2}) is true if $s_2(k)=1$.

{\it Case 2.} $s_2(k)\ge 2$. Since $k$ is neither a power of 2 nor
a power of 2 minus 1 and $1\leq i\le\lceil\frac{k}{2}\rceil-1$, by
Lemma \ref{lem 8}, (\ref{add14}) and (\ref{add18}) one obtains that
\begin{align*}
\nu_2(S(a2^n,k)-S(b2^n,k))&=n+\nu_2(a-b)-\lceil\log_2k\rceil+s_2(k)\\
&=n+\nu_2(a-b)-\lceil\log_2k\rceil+s_2(k)+\delta(k).
\end{align*}
since $\delta(k)=0$. Hence (\ref{c2}) holds in this case.

The proof of Theorem \ref{thm 4} is complete. \hfill$\Box$\\

\noindent{\bf Remark 5.1.} By Theorem \ref{thm 4}, one knows that
Conjecture 1.1 is true if $k$ is not a power of 2 minus 1. Theorem
\ref{thm 4} tells us that
$\nu_2\Big(S(a2^{n+1},k)-S(b2^{n+1},k)\Big)<n+\nu_2(a-b)$ if $k\neq
2^m-1$ and $k\neq 4$. In fact, in the proof of Theorem \ref{thm
4}, to handle the case that $k\neq 2^m-1$ and $k\neq 4$, one makes
use of the Junod's congruence (\ref{1}). However, for the remaining
case $k=2^m-1$, numerical experimentation (see \cite{[Le2]}) suggests
that
$$\nu_2\Big(S(a2^{n+1},2^m-1)-S(b2^{n+1},2^m-1)\Big)=n+1+\nu_2(a-b)>n+\nu_2(a-b).$$
Thus, to get such result, the modulus in Lemma \ref{lem 10} (and so
(\ref{add14}) above) is not enough. Hence one has to find a
congruence stronger than (\ref{1}). Unfortunately, one encounters
difficulties in strengthening congruence (5). Maybe one needs some
new approaches to attack Conjecture 1.1 for the remaining case $k=2^m-1$.\\

\noindent{\bf Acknowledgements} The authors would like to thank the anonymous
referees for careful reading of the manuscript and for helpful comments and
suggestions which improved its presentation.

\small

\end{document}